\documentclass{amsart}
\title{Abstract matrix-tree theorem}
\author[Yu.\,Burman]{Yurii Burman}
\address{National Research University Higher School of Economics, 119048, 6
Usacheva str., Moscow, Russia, and Independent University of Moscow,
119002, 11 B.Vlassievsky per., Moscow, Russia}
\email{burman@mccme.ru}
\date{}

\makeatletter


\RequirePackage{amssymb}

\newcommand{\theoremName}{Theorem}
\newcommand{\lemmaName}{Lemma}
\newcommand{\corollaryName}{Corollary}
\newcommand{\statementName}{Proposition}

\newcommand{\remarkName}{Remark}
\newcommand{\exampleName}{Example}
\newcommand{\definitionName}{Definition}
\newcommand{\problemName}{Problem}
\newcommand{\proofName}{Proof}
\renewcommand{\proofname}{\proofName}
\newcommand{\answerName}{Answer}
\newcommand{\hintName}{Hint}

\theoremstyle{plain}

\newtheorem {theorem}{\theoremName}
\newtheorem {lemma}{\lemmaName}
\newtheorem {corollary}{\corollaryName}

\newtheorem {proposition}{\statementName}
\newtheorem {Theorem}{\theoremName}
\def \theTheorem {\!\!}

\theoremstyle{remark}

\newtheorem{Remark}{\remarkName}

\newtheorem{example}{\exampleName}

\theoremstyle{definition}

\newtheorem{definition}{\definitionName}

\let\@newpf\proof
\let\proof\relax

\def \namepf[#1] {\@newpf[\proofname\ #1]}
\newenvironment{proof}{\@ifnextchar[{\namepf}{\@newpf[\proofname]}}{\qed\endtrivlist}

\newcounter{qst}
\@addtoreset{qst}{problem}



\def \Integer {{\mathbb Z}}

\def \Complex {{\mathbb C}}


\def \lnorm#1\rnorm {\vphantom{#1}\left\|\smash{#1}\right\|}
\def \lmod#1\rmod {\vphantom{#1}\left|\smash{#1}\right|}


\newcommand \bydef {\stackrel{\mbox{\scriptsize def}}{=}}


\@ifundefined{name}{\newcommand \name[1] {\mathop{\rm #1}\nolimits}}%
{\relax}


\renewcommand \phi {\varphi}
\renewcommand \rho {\varrho}
\renewcommand \emptyset {\varnothing}

\makeatother

\usepackage[mathscr]{euscript}

\newcommand{\DT}[1]{#1 \dots #1}
\def \Graph {\mathcal G}
\def \Undir {\mathcal Y}
\def \POS {\mathcal P}
\def \Lapl {\Delta}
\def \AC {\name{\mathfrak A}}
\def \SSC {\name{\mathfrak S}}
\let \Det=\det
\def \det {\Det\nolimits}
\def \Manif {\mathcal M}
\def \SumSub {\name{\mathscr S}}

\numberwithin{equation}{section}

\theoremstyle{plain}
\newtheorem{Conjecture}{Conjecture}

\numberwithin{theorem}{section}

\newtheorem{lemma}[theorem]{Lemma}

\newtheorem{proposition}[theorem]{Proposition}

\newtheorem{corollary}[theorem]{Corollary}

\theoremstyle{remark}

\newtheorem{example}[theorem]{Example}

\theoremstyle{definition}

\newtheorem{definition}[theorem]{Definition}

\makeatother

\begin{document}

 \begin{abstract}
The classical matrix-tree theorem discovered by G.\,Kirchhoff in 1847
relates the principal minor of the $n \times n$ Laplace matrix to a
particular sum of monomials of matrix elements indexed by directed trees
with $n$ vertices and a single sink. In this paper we consider a
generalization of this statement: for any $k \ge n$ we define a degree $k$
polynomial $\det_{n,k}$ of matrix elements and prove that this polynomial
applied to the Laplace matrix gives a sum of monomials indexed by acyclic
graphs with $n$ vertices and $k$ edges.
 \end{abstract}

\maketitle

\section{Introduction and the main results}

\subsection{Principal definitions}\label{SSec:Def}

Denote by $\Gamma_{n,k}$ the set of all directed graphs with $n$ vertices
numbered $1 \DT, n$ and $k$ edges numbered $1 \DT, k$. We will write $e =
[ab]$ if $e$ is an edge from vertex $a$ to vertex $b$; in particular,
$[aa]$ means a loop attached to the vertex $a$. We will treat elements of
$\Gamma_{n,k}$ as sequences of edges: $G = (e_1 \DT, e_k) \in \Gamma_{n,k}$
means a graph where the edge $e_\ell$ has number $\ell$, for all $\ell = 1
\DT, k$. By a slight abuse of notation $e \in G$ will mean that $e$ is an
edge of $G$ (regardless of number).

Let $G \in \Gamma_{n,k}$ and $e \in G$. By $G \setminus e$, $G/e$ and
$G_e^{\vee}$ we will denote the graph $G$ with $e$ deleted, $e$ contracted
and $e$ reversed, respectively. Note for correctness that since $G
\setminus e \in \Gamma_{n,k-1}$, one has to change the edge numbering in
$G$ after deleting $e$: namely, if $e$ bears number $s$ in $G$ then the
numbers of the edges are preserved if they are less than $s$ and lowered by
$1$ otherwise. For $G/e \in \Gamma_{n-1,k-1}$ the same renumbering is
applied both to the edges and to the vertices. The contracted edge $e$
should not be a loop.

A graph $H \in \Gamma_{n,m}$ is called a subgraph of $G \in \Gamma_{n,k}$
(notation $H \subseteq G$) if $H$ is obtained from $G$ by deletion of
several (possibly zero) edges.

Denote by $\Graph_{n,k}$ a vector space over $\Complex$ spanned by
$\Gamma_{n,k}$. The direct sum $\Graph_n \bydef \bigoplus_{k=0}^\infty
\Graph_{n,k}$ bears the structure of an associative algebra: one defines a
product of the graphs $G_1 = (e_1 \DT, e_{k_1}) \in \Gamma_{n,k_1}$ and
$G_2 = (h_1 \DT, h_{k_2}) \in \Gamma_{n,k_2}$ as $G_1*G_2 \bydef (e_1 \DT,
e_{k_1}, h_1 \DT, h_{k_2}) \in \Gamma_{n,k_1+k_2}$; then $*$ is extended to
the whole $\Graph_n$ as a bilinear operation. Note that $G_1*G_2 \ne
G_2*G_1$ (the edges are the same but the edge numbering is different), so
the algebra $\Graph_n$ is not commutative.

We call a graph $G \in \Gamma_{n,k}$ {\em strongly connected} if every two
its vertices can be joined by a directed path. A graph is {\em strongly
semiconnected} if every its connected component (in the topological sense)
is strongly connected; equivalently, if every its edge is a part of a
directed cycle. A strongly semiconnected graph may contain isolated
vertices (i.e.\ vertices not incident to any edge); by $\SSC_{n,k}^{\{i_1
\DT, i_s\}}$ we denote the set of strongly semiconnected graphs $G \in
\Gamma_{n,k}$ such that the vertices $i_1 \DT, i_s$, and only they, are
isolated. By $\SSC_{n,k} \bydef \bigcup_{I \subset \{1 \DT, n\}}
\SSC_{n,k}^I$ we will denote the set of all strongly semiconnected graphs.

We call a graph $G \in \Gamma_{n,k}$ {\em acyclic} if it contains no
directed cycles. Recall that a vertex $a$ of the graph $G$ is called a {\em
sink} if $G$ has no edges starting from $a$. Note that an isolated vertex
is a sink but a vertex with a loop attached to it is not. We denote by
$\AC_{n,k}^{\{i_1 \DT, i_s\}}$ the set of acyclic graphs $G \in
\Gamma_{n,k}$ such that the vertices $i_1 \DT, i_s$, and only they, are
sinks. By $\AC_{n,k} \bydef \bigcup_{I \subset \{1 \DT, n\}} \AC_{n,k}^I$
we will denote the set of all acyclic graphs.

 \begin{example}\label{Ex:SSC}
If a vertex of a strongly semiconnected graph $G \in \SSC_{n,k}^I$ is not
isolated then there is at least one edge starting from it; so if $I = \{i_1
\DT, i_s\}$ and $\SSC_{n,k}^I \ne \emptyset$ then $k \ge n-s$.

Let $k=n-s$. If $G \in \Gamma_{n,k}^I$ then for any vertex $i \notin I$
there is exactly one edge $[i, \sigma(i)]$ starting at it and exactly one
edge $[j, \sigma(j)] = [j,i]$ finishing at it (that is, $\sigma(j) = i$).
Hence $\sigma$ is a bijection $\{1 \DT, n\} \setminus I \to \{1 \DT, n\}
\setminus I$ (a permutation of $k = n-s$ points).

Geometrically $G$ is a union of disjoint directed cycles passing through
all vertices except $i_1 \DT, i_s$.
 \end{example}

 \begin{example}\label{Ex:Forest}
Let $n > k$; then any graph $G \in \Gamma_{n,k}$ contains at least $n-k$
connected components. If $G$ is acyclic then every its connected component
contains a sink. So for $I = \{i_1 \DT, i_s\}$ if $\AC_{n,k}^I \ne
\emptyset$ then $k \ge n-s$.

Let $k = n-s$. Then the elements of $\AC_{n,k}^I$ are forests of $s$
components, each component containing exactly one vertex $i_\ell \in I$
(for some $\ell = 1 \DT, s$), which is its only sink. This component is a
tree and every its edge is directed towards the sink $i_\ell$.
 \end{example}

\subsection{Determinants and minors}

Let $W = (w_{ij})$ be a $n \times n$-matrix; denote by $\langle W \vert:
\Graph_{n,k} \to \Complex$ a linear functional acting on the basic element
$G \in \Gamma_{n,k}$ as
 \begin{equation*}
\langle W \mid G\rangle \bydef \prod_{[ij] \in G} w_{ij}.
 \end{equation*}
Note that $\langle W \mid G\rangle$ is independent of the edge numbering in
$G$; in particular, $\langle W \mid G_1*G_2 - G_2*G_1\rangle = 0$ for all
$G_1, G_2$.

For a function $f: \bigcup_s \Gamma_{n,s} \to \Complex$ and a graph $G \in
\Gamma_{n,k}$ introduce the notation
 \begin{equation}\label{Eq:SumSubgr}
\SumSub(f;G) \bydef \sum_{H \subseteq G} f(H).
 \end{equation}
For a set of graphs $\mathfrak B \subset \Gamma_{n,k}$ denote
 \begin{align*}
&U(\mathfrak B) \bydef \sum_{G \in \mathfrak B} G \in \Graph_{n,k}, \\
&X(\mathfrak B) \bydef \sum_{G \in \mathfrak B} (-1)^{\beta_0(G)} G \in
\Graph_{n,k};
 \end{align*}
$\beta_0(G)$ here means the $0$-th Betti number of $G$, i.e.\ the number of
its connected components (in the topological sense).

 \begin{definition} \label{Df:Minors}
The element
 \begin{equation*}
\det_{n,k}^I \bydef \frac{(-1)^k}{k!} X(\SSC_{n,k}^I) \in
\Graph_{n,k}
 \end{equation*}
is called a {\em universal diagonal $I$-minor} of degree $k$; in
particular, $\det_{n,k}^\emptyset$ is called a {\em universal determinant}
of degree $k$.

The element
 \begin{equation*}
\det_{n,k}^{i/j} \bydef \frac{(-1)^k}{k!} X(\{G \in \Graph_{n,k}
\mid ([ij])*G \in \SSC_{n,k+1}^\emptyset\})
 \end{equation*}
is called a {\em universal (codimension $1$) $(i,j)$-minor} of degree $k$.
 \end{definition}

 \begin{example}\label{Ex:Det}
Example \ref{Ex:SSC} implies that if $I = \{i_1 \DT, i_s\}$ and $k < n-s$
then $\det_{n,k}^I = 0$.

Let $k = n$ and $I = \emptyset$. By Example \ref{Ex:SSC} the graphs $G \in
\SSC_{n,n}^\emptyset$ are in one-to-one correspondence with permutations
$\sigma$ of $\{1 \DT, n\}$. It is easy to see that $(-1)^{\beta_0(G)}$ is
equal to $(-1)^n$ if $\sigma$ is even and to $-(-1)^n$ if it is odd.
Geometrically $G$ is a union of disjoint directed cycles. If the order of
vertices in all the cycles is fixed, then there are $n!$ ways to assign
numbers $\{1 \DT, n\}$ to the edges; this implies the equality
 \begin{equation*}
\langle W \mid \det_{n,n}^\emptyset\rangle = \sum_\sigma
(-1)^{\text{parity of $\sigma$}} w_{1\sigma(1)} \dots w_{n \sigma(n)} =
\det W
 \end{equation*}
for any matrix $W = (w_{ij})$. Similarly, for any set $I = \{i_1 \DT,
i_s\}$ the value $\nolinebreak[0]\langle W \mid \det_{n,n-s}^I\rangle$ is
equal to the diagonal minor of the matrix $W$ obtained by deletion of the
rows and the columns $i_1 \DT, i_s$. Also $\langle W \mid
\det_{n,n-1}^{i/j}\rangle$ is equal to the codimension $1$ minor of $W$
obtained by deletion of the row $i$ and the column $j$. This explains the
terminology of Definition \ref{Df:Minors}.
 \end{example}

The elements $\det_{n,k}^I$ exhibit some properties one would expect from
determinants and minors:

 \begin{proposition} \label{Pp:DetProp}\strut
 \begin{enumerate}
\item\label{It:RowCol} (generalized row and column expansion)
 \begin{equation}\label{Eq:DetViaMinors}
\det_{n,k}^\emptyset = \frac{1}{k} \sum_{i,j=1}^n ([ij]) *
\det_{n,k-1}^{i/j}.
 \end{equation}

\item\label{It:PDer} (partial derivative with respect to a diagonal matrix
element) Let matrix elements $w_{ij}$, $i,j = 1 \DT, n$, of the matrix $W$
be independent (commuting) variables. Then for any $i = 1 \DT, n$ and any
$m = 1 \DT, k$ one has
 \begin{equation}\label{Eq:Deriv}
\frac{\partial^m}{\partial w_{ii}^m} \langle W \mid
\det_{n,k}^\emptyset\rangle = \langle W \mid
\det_{n,k-m}^\emptyset + \det^{\{i\}}_{n,k-m}\rangle.
 \end{equation}
 \end{enumerate}
 \end{proposition}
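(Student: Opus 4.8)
For Part~\ref{It:RowCol} the plan is a graph-by-graph matching of the two sides of \eqref{Eq:DetViaMinors}. Unwinding Definition~\ref{Df:Minors}, each summand $([ij])*\det_{n,k-1}^{i/j}$ on the right equals, up to the scalar $\tfrac{(-1)^{k-1}}{(k-1)!}$, the sum $\sum_H(-1)^{\beta_0(H)}\,([ij])*H$ over those $H\in\Gamma_{n,k-1}$ with $([ij])*H\in\SSC_{n,k}^\emptyset$. The first step is the observation that the map carrying $G\in\SSC_{n,k}^\emptyset$ to the pair $\bigl(e,\,G\setminus e\bigr)$, where $e=[ij]$ is the edge of $G$ bearing number $1$ and $G\setminus e$ is taken with the renumbering of \S\ref{SSec:Def}, is a bijection of $\SSC_{n,k}^\emptyset$ onto $\bigsqcup_{i,j}\{[ij]\}\times\{H:\,([ij])*H\in\SSC_{n,k}^\emptyset\}$, with inverse $(i,j,H)\mapsto([ij])*H$. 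Hence both sides of \eqref{Eq:DetViaMinors} are sums over the same family of graphs, and the equality reduces to comparing the coefficient of each $G$.

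That comparison hinges on the lemma: \emph{if $G\in\SSC_{n,k}^\emptyset$ and $e$ is its edge numbered $1$, then $\beta_0(G\setminus e)=\beta_0(G)$}. Strong semiconnectedness forces $e$ to lie on a directed cycle. If $e=[ii]$ is a loop, its deletion changes neither connectivity nor the component count. If $e=[ij]$ with $i\ne j$, the directed cycle through $e$ contains a directed path from $j$ to $i$ that avoids $e$, so $i$ and $j$ stay in one component of $G\setminus e$, and the remaining edges of that cycle keep both $i$ and $j$ incident to an edge, so no new isolated vertex arises. Therefore $(-1)^{\beta_0}$ is preserved, and \eqref{Eq:DetViaMinors} drops out after collecting the powers of $-1$ and the factorials ($\tfrac1k\cdot\tfrac1{(k-1)!}=\tfrac1{k!}$).

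For Part~\ref{It:PDer} I would first reduce to the case $m=1$ by induction on $m$. Differentiating once and invoking the case $m=1$, $\frac{\partial^m}{\partial w_{ii}^m}\langle W\mid\det_{n,k}^\emptyset\rangle=\frac{\partial^{m-1}}{\partial w_{ii}^{m-1}}\langle W\mid\det_{n,k-1}^\emptyset+\det_{n,k-1}^{\{i\}}\rangle$; since a graph in which $i$ is isolated has no loop at $i$, the summand $\langle W\mid\det_{n,k-1}^{\{i\}}\rangle$ is free of $w_{ii}$ and is annihilated by $\frac{\partial^{m-1}}{\partial w_{ii}^{m-1}}$ when $m\ge2$, while $\frac{\partial^{m-1}}{\partial w_{ii}^{m-1}}\langle W\mid\det_{n,k-1}^\emptyset\rangle$ falls to the induction hypothesis. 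For the base case $m=1$, expand $\langle W\mid\det_{n,k}^\emptyset\rangle=\frac{(-1)^k}{k!}\sum_{G\in\SSC_{n,k}^\emptyset}(-1)^{\beta_0(G)}\prod_{[ab]\in G}w_{ab}$; differentiating the monomial of $G$ yields one copy of the monomial of $G\setminus e_\ell$ for each numbered edge $e_\ell$ of $G$ equal to a loop at $i$. The key step is then to analyse the map $(G,\ell)\mapsto G\setminus e_\ell$ on pairs with $G\in\SSC_{n,k}^\emptyset$ and $e_\ell=[ii]$: deleting a loop at $i$ keeps the graph strongly semiconnected and can isolate at most the vertex $i$, so the image is $\SSC_{n,k-1}^\emptyset\sqcup\SSC_{n,k-1}^{\{i\}}$, and re-inserting a loop at $i$ into any of the $k$ available slots shows the map is exactly $k$-to-one. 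Since loop deletion preserves $\beta_0$, the fibre over $G'$ contributes $k\,(-1)^{\beta_0(G')}$ times the monomial of $G'$, the dichotomy ``$i$ still incident to an edge after the deletion, or not'' separates the two summands on the right, and collecting the constant $\tfrac{(-1)^k}{k!}\cdot k$ finishes the base case.

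I expect the principal difficulty to be, for Part~\ref{It:RowCol}, the sign lemma — concretely, passing from ``$e$ lies on a directed cycle'' to ``deleting $e$ merges no connected components and leaves no vertex isolated''; everything after that is bookkeeping of $\pm1$'s and factorials. For Part~\ref{It:PDer}, the delicate points are that the loop-deletion map has fibres of size exactly $k$ — this is what reconciles the factor $\tfrac1k$ in \eqref{Eq:Deriv} with the $\tfrac1{(k-1)!}$ produced by Definition~\ref{Df:Minors} — and that the two halves of the dichotomy match exactly the two terms $\det_{n,k-m}^\emptyset$ and $\det_{n,k-m}^{\{i\}}$.
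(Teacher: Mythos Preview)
Part~\ref{It:RowCol} is correct and is exactly the paper's argument: single out the edge numbered~$1$, use strong semiconnectedness to find a directed path back from its head to its tail, conclude $\beta_0$ is unchanged, and match coefficients. (Your remark that ``no new isolated vertex arises'' is harmless but superfluous; all that matters for $\beta_0$ is that the edge is not a bridge.)

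For Part~\ref{It:PDer} you take a somewhat different route from the paper. The paper stratifies $\SSC_{n,k}^\emptyset$ by the number $q$ of loops at $i$, derives an explicit Taylor expansion \eqref{Eq:Develop} of $\langle W\mid\det_{n,k}^\emptyset\rangle$ in powers of $w_{ii}$, and then reads off all $m$ derivatives at once. You instead induct on $m$, reducing to $m=1$ via the (correct) observation that $\langle W\mid\det_{n,k-1}^{\{i\}}\rangle$ is free of $w_{ii}$, and for $m=1$ count fibres of the loop-deletion map. The two are close cousins --- your $k$-to-one fibre count is the $q=1$ shadow of the paper's $\binom{k}{q}$ count of loop placements --- but the paper's expansion is marginally slicker in that it delivers all $m$ in a single stroke, while your version is more hands-on and avoids writing out the full polynomial.

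One caveat: your phrase ``collecting the constant $\tfrac{(-1)^k}{k!}\cdot k$ finishes the base case'' hides a sign discrepancy. After the $k$-to-one reduction you have $\tfrac{(-1)^k}{(k-1)!}\,\langle W\mid X(\SSC_{n,k-1}^\emptyset)+X(\SSC_{n,k-1}^{\{i\}})\rangle$, whereas by Definition~\ref{Df:Minors} the right-hand side of \eqref{Eq:Deriv} at $m=1$ equals $\tfrac{(-1)^{k-1}}{(k-1)!}$ times that same bracket. A direct check with $n=k=2$ confirms the mismatch: $\partial_{w_{11}}\det W=w_{22}$, while $\langle W\mid\det_{2,1}^\emptyset+\det_{2,1}^{\{1\}}\rangle=-w_{22}$. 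Your argument --- and, once the dropped $(-1)^k$ at the start of \eqref{Eq:Develop} is restored, the paper's as well --- actually proves \eqref{Eq:Deriv} with an extra factor $(-1)^m$ on the right. The approach is sound; only the final sign bookkeeping needs care.
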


See \cite[Lemma 86]{Epstein} for a formula similar to \eqref{Eq:Deriv}
(with $m=1$ and a finite difference instead of a derivative).

\subsection{Main results}\label{SSec:Main}

Let $G \in \Gamma_{n,k}$, $p \in \{1 \DT, k\}$ and $i,j \in \{1 \DT, n\}$.
Denote by $R_{ab;p} G \in \Gamma_{n,k}$ the graph obtained from $G$ by
replacement of its $p$-th edge by the edge $[ab]$ bearing the same number
$p$.

Consider now a linear operator $B_p: \Graph_{n,k} \to \Graph_{n,k}$
acting on every basic element $G \in \Gamma_{n,k}$ as follows:
 \begin{equation*}
B_p(G) =  \begin{cases}
G, &\text{if the $p$-th edge of $G$ is not a loop},\\
-\sum_{b \ne a} R_{ab;p} G, &\text{if the $p$-th edge of $G$ is the loop
$[aa]$}.
 \end{cases}
 \end{equation*}
In particular, $B_p = 0$ if $n=1$ (and $k > 0$).

 \begin{definition} \label{Df:Laplace}
The product $\Lapl \bydef B_1 \dots B_k: \Graph_{n,k} \to \Graph_{n,k}$ is
called {\em Laplace operator}.
 \end{definition}

If $n = 1$ and $k > 0$ then $\Delta = 0$; also take $\Delta = \name{id}$ by
definition if $k = 0$.

 \begin{Remark}
The operators $B_p$, $p = 1 \DT, k$, are commuting idempotents: $B_p^2 =
B_p$ and $B_p B_q = B_q B_p$ for all $p, q = 1 \DT, k$. Therefore, $\Lapl$
is an idempotent, too: $\Lapl^2 = \Lapl$.
 \end{Remark}

Let $W = (w_{ij})_{i,j=1}^n$ be a $n \times n$-matrix, like in Example
\ref{Ex:Det} and Proposition \ref{Pp:DetProp}. Denote by $\widehat{W}$ the
corresponding Laplace matrix, i.e.\ a matrix with nondiagonal elements
$w_{ij}$ ($1 \le i \ne j \le n$) and diagonal elements $-\sum_{j \ne i}
w_{ij}$ ($1 \le i \le n$). It follows from Definition \ref{Df:Laplace} that
 \begin{equation*}
\langle \widehat{W} \mid X \rangle = \langle W \mid \Lapl(X) \rangle
 \end{equation*}
for any $X \in \Graph_{n,k}$. This equation explains the name ``Laplace
operator'' for $\Lapl$. Note that since $\Lapl(X)$ is a sum of graphs
containing no loops, one is free to change diagonal entries of $W$ in the
right-hand side; in particular, one can use $\widehat{W}$ instead.

The main results of this paper are the following two theorems:

 \begin{theorem}[abstract matrix-tree theorem for diagonal minors]\label{Th:Diag}
 \begin{equation}\label{Eq:LaplDiag}
\Lapl(\det_{n,k}^I) = \frac{(-1)^n}{k!} U(\AC_{n,k}^I).
 \end{equation}
 \end{theorem}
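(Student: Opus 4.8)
The plan is to unfold the definition of $\Lapl$ and reduce \eqref{Eq:LaplDiag} to a combinatorial identity about strongly connected graphs.

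\textbf{Step 1: unfolding $\Lapl$.} Since $B_p$ keeps a graph whose $p$-th edge is not a loop and turns a loop $[aa]$ in position $p$ into $-\sum_{b\ne a}R_{ab;p}$, the $B_p$ commute, and $\Lapl(G)$ for $G\in\SSC_{n,k}^I$ is obtained by resolving each loop of $G$ independently into a non-loop edge with the same tail and an arbitrary head, with overall sign $(-1)^{(\text{number of loops of }G)}$. Reading this backwards, a loopless $H\in\Gamma_{n,k}$ occurs in $\Lapl(X(\SSC_{n,k}^I))$ with coefficient
\[
c_H=\sum_{P}(-1)^{|P|+\beta_0(\lambda_P H)},
\]
where $\lambda_P H$ is $H$ with the edge in each position $p\in P$ replaced by the loop at its tail, and $P$ ranges over subsets of $\{1,\dots,k\}$ with $\lambda_P H\in\SSC_{n,k}^I$. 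Because $\det_{n,k}^I=\frac{(-1)^k}{k!}X(\SSC_{n,k}^I)$, Theorem \ref{Th:Diag} amounts to: $c_H=(-1)^{n+k}$ when $H\in\AC_{n,k}^I$, and $c_H=0$ otherwise.

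\textbf{Step 2: admissible $P$.} A loop is its own directed cycle and is useless for placing other edges on cycles, so $\lambda_P H$ is strongly semiconnected iff $H\setminus P$ is. This forces $P$ to contain every edge of $H$ that lies on no directed cycle; deleting exactly those leaves the cyclic part of $H$, which is the (vertex- and edge-disjoint) union of the induced subgraphs $H[C_1],\dots,H[C_r]$ on the nontrivial strongly connected components $C_1,\dots,C_r$, plus the remaining isolated vertices. Moreover (i) the isolated vertices of $\lambda_P H$ form the set of sinks of $H$ for \emph{every} admissible $P$: a vertex inside some $C_i$ is never isolated in $\lambda_P H$ (losing all its $C_i$-edges makes it gain a loop), while a vertex outside all $C_i$ is isolated in $\lambda_P H$ exactly when it is a sink of $H$; and (ii) $|P|$ and $\beta_0(\lambda_P H)$ split over the components $C_i$. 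Hence admissible $P$ exist only if the sinks of $H$ are exactly $I$, and then
\[
c_H=\epsilon_H\prod_{i=1}^{r}\Sigma(C_i),\qquad
\Sigma(C):=\sum_{\substack{Q\subseteq E(C)\\ C\setminus Q\text{ strongly semiconnected}}}(-1)^{|Q|+\beta_0(C\setminus Q)},
\]
where $\epsilon_H=\pm1$ depends only on $H$ and equals $(-1)^{n+k}$ when $H$ is acyclic.

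\textbf{Step 3: the two cases.} If $H$ is acyclic there are no $C_i$, the product is empty, $P$ must be all $k$ positions, $\lambda_P H$ consists of $n$ loops, and $c_H=\epsilon_H=(-1)^{n+k}$ exactly when the sinks of $H$ are $I$, i.e.\ $H\in\AC_{n,k}^I$, and $c_H=0$ otherwise. If $H$ has a directed cycle then $r\ge1$, and $c_H=0$ will follow from the key lemma: $\Sigma(C)=0$ for every strongly connected $C$ with at least one edge.

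\textbf{Step 4: the key lemma, and the main obstacle.} By Euler's formula $\beta_0(G')-\beta_1(G')=|V(C)|-|E(G')|$ for $G'\subseteq C$, one has $(-1)^{|Q|+\beta_0(C\setminus Q)}=(-1)^{|E(C)|+|V(C)|}(-1)^{\beta_1(C\setminus Q)}$, so $\Sigma(C)=0$ is equivalent to $\sum_{G'\subseteq C,\ G'\text{ strongly semiconnected}}(-1)^{\beta_1(G')}=0$. I would prove, more generally, that for every directed graph $G$
\[
\sum_{\substack{G'\subseteq G\\ G'\text{ strongly semiconnected}}}(-1)^{\beta_1(G')}=
\begin{cases}1,&G\text{ acyclic},\\ 0,&\text{otherwise}.\end{cases}
\]
If some edge of $G$ lies on no directed cycle, deleting it changes neither side, so by induction every edge of $G$ may be assumed to lie on a cycle; then $G$ is a disjoint union of nontrivial strongly connected components and isolated vertices, the left-hand side factors over the components, and it remains to treat a single strongly connected $C$ with an edge. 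For this case I would split the strongly semiconnected subgraphs of $C$ according to whether they contain a fixed edge $e$ and, using an ear decomposition of $C$, build a sign-reversing involution between the two classes; equivalently, one may try to identify $(-1)^{\beta_1(G')}$ with the Möbius number $\mu(\hat 0,G')$ of the lattice of strongly semiconnected subgraphs of $C$, so that $\sum_{G'}(-1)^{\beta_1(G')}=\sum_{G'}\mu(\hat 0,G')=[\hat 0=\hat 1]=0$. Making this involution (or the Möbius identification) precise, and in particular controlling which edge deletions preserve strong semiconnectedness, is the step I expect to be the heart of the proof.
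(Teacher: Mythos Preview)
Your reduction in Steps~1--3 is correct and is exactly the paper's own reduction (Section~2.4): unfolding $\Lapl$ and using Euler's formula turns the coefficient of a loopless $H$ in $\Lapl(\det_{n,k}^I)$ into $\frac{(-1)^n}{k!}\SumSub(\sigma;H)$, so Theorem~\ref{Th:Diag} is equivalent to the identity $\SumSub(\sigma;G)=1$ for acyclic $G$ and $=0$ otherwise. Your ``more general'' statement in Step~4 is precisely that identity, i.e.\ Theorem~\ref{Th:Direct}$'$ of the paper. So you have correctly located the heart of the proof --- but you have not proved it. The proposal stops at a sketch: a hoped-for sign-reversing involution via an ear decomposition, or a M\"obius identification $(-1)^{\beta_1(G')}=\mu(\hat 0,G')$ in the poset of strongly semiconnected subgraphs. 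Neither is carried out, and the M\"obius route in particular looks fragile: the intersection of two strongly semiconnected subgraphs need not be strongly semiconnected, so this poset is not a lattice, and there is no evident reason for its M\"obius function to equal $(-1)^{\beta_1}$.

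The paper does \emph{not} produce such an involution. Instead it passes (by ordinary M\"obius inversion on the Boolean lattice of edge-subsets) to the equivalent dual identity $\SumSub(\alpha;G)=(-1)^k\sigma(G)$ (Theorem~\ref{Th:Direct}) and proves that by induction on $|V|+|E|$. The induction is routine when $G$ is disconnected, when $G$ has an edge lying on no directed cycle, or when $G$ is strongly connected with a \emph{crucial} edge (one whose deletion destroys strong connectivity); the real work is the remaining case, handled by an edge-reversal trick (Lemma~\ref{Lm:Reversal}): if $e$ is not crucial then $\SumSub(\alpha;G)+\SumSub(\alpha;G_e^{\vee})=2(-1)^k\sigma(G)$, so the identity for $G$ and for $G_e^{\vee}$ stand or fall together, and by reversing the edges entering a fixed vertex one eventually reaches a graph with a crucial edge. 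Your ``controlling which edge deletions preserve strong semiconnectedness'' is exactly the difficulty, and the paper's answer is to sidestep it via this reversal argument rather than build an involution directly.
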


\noindent and

 \begin{theorem}[abstract matrix-tree theorem for codimension $1$ minors]\label{Th:Codim1}
 \begin{equation}\label{Eq:LaplCodim1}
\Lapl(\det_{n,k}^{i/j}) = \frac{(-1)^n}{k!} U(\AC_{n,k}^{\{i\}}).
 \end{equation}
 \end{theorem}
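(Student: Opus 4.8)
The approach is to derive Theorem~\ref{Th:Codim1} from the diagonal case Theorem~\ref{Th:Diag} together with the row-and-column expansion of Proposition~\ref{Pp:DetProp}(\ref{It:RowCol}). A preliminary remark: $\AC_{n,k}^\emptyset = \emptyset$ for every $n \ge 1$, since a nonempty finite acyclic digraph always has a sink (the last vertex of a maximal directed path, or any vertex when there are no edges), so an acyclic graph with no sinks cannot exist. Consequently Theorem~\ref{Th:Diag} applied with $I = \emptyset$ gives $\Lapl(\det_{n,k}^\emptyset) = 0$.

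Next I would prove, straight from Definition~\ref{Df:Minors}, the identity $\det_{n,k}^{a/a} = \det_{n,k}^\emptyset + \det_{n,k}^{\{a\}}$. For $G \in \Gamma_{n,k}$, prepending the loop $[aa]$ creates no new directed path, so an edge of $G$ lies on a directed cycle of $([aa])*G$ precisely when it does in $G$, while $a$ is automatically non-isolated in $([aa])*G$; hence $([aa])*G \in \SSC_{n,k+1}^\emptyset$ iff $G$ is strongly semiconnected and every vertex other than $a$ is non-isolated in $G$, i.e.\ iff $G \in \SSC_{n,k}^\emptyset \sqcup \SSC_{n,k}^{\{a\}}$. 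As $\beta_0$ is unchanged, $X$ of this set equals $X(\SSC_{n,k}^\emptyset) + X(\SSC_{n,k}^{\{a\}})$, which is the claimed identity. Applying $\Lapl$ and invoking $\Lapl(\det_{n,k}^\emptyset) = 0$ and Theorem~\ref{Th:Diag} with $I = \{a\}$ already proves the theorem when $i = j$, so it remains to show that $\Lapl(\det_{n,k}^{i/j})$ is independent of $j$.

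For this I would apply $\Lapl = B_1 \dots B_k$ to both sides of \eqref{Eq:DetViaMinors}. Since the $B_p$ commute, one may apply $B_2 \dots B_k$ first; these operators only affect the edges numbered $\ge 2$ of a graph in $\Gamma_{n,k}$, and on a product $([ab])*Z$ with $Z \in \Graph_{n,k-1}$ those are exactly the edges inherited from $Z$ (shifted up by one), so $B_2 \dots B_k$ acts on such a product as $([ab])*\Lapl(Z)$. Then $B_1$ leaves the leading edge $[ab]$ unchanged when $a \ne b$ and turns the loop $[aa]$ into $-\sum_{b \ne a}([ab])$ when $a = b$. Splitting the sum in \eqref{Eq:DetViaMinors} into its non-loop and loop parts gives
\begin{equation*}
0 = \Lapl(\det_{n,k}^\emptyset) = \frac1k \sum_{a \ne b} ([ab]) * \Lapl\bigl(\det_{n,k-1}^{a/b} - \det_{n,k-1}^{a/a}\bigr).
\end{equation*}
For distinct non-loop edges $[ab]$ the maps $H \mapsto ([ab])*H$ have pairwise disjoint images in $\Gamma_{n,k}$ (the leading edge of $([ab])*H$ records both $a$ and $b$) and are injective, so this sum can vanish only if every summand vanishes: $\Lapl(\det_{n,k-1}^{a/b}) = \Lapl(\det_{n,k-1}^{a/a})$ for all $a \ne b$. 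Together with the case $i = j$ this gives $\Lapl(\det_{n,k}^{i/j}) = \Lapl(\det_{n,k}^{i/i}) = \frac{(-1)^n}{k!}U(\AC_{n,k}^{\{i\}})$ for all $i, j$ and all $k$, the degenerate cases ($n = 1$, or $k = 0$) being immediate by inspection.

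The step I expect to be the main obstacle is the third one: keeping the edge numbering straight while commuting $B_1$ past $B_2 \dots B_k$ and while expanding the loop terms of \eqref{Eq:DetViaMinors}, and making sure the identity of the second step is read off with the correct set of isolated vertices. The rest is formal manipulation.
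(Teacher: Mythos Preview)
Your proof is correct and follows essentially the same route as the paper's: both establish $\det_{n,k}^{i/i} = \det_{n,k}^\emptyset + \det_{n,k}^{\{i\}}$, apply $\Lapl$ to the row/column expansion \eqref{Eq:DetViaMinors}, use Theorem~\ref{Th:Diag} with $I = \emptyset$ (giving $0$) and $I = \{i\}$, and finish by noting that the summands indexed by distinct first edges $[ij]$ cannot cancel. Your write-up is in fact slightly more explicit than the paper's (you justify the $\det^{i/i}$ identity and the behaviour of $\Lapl$ on products $([ab])*Z$ in detail), but the argument is the same.
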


Applying the functional $\langle \widehat{W}\vert\relax$ to equation
\eqref{Eq:LaplDiag} with $k = n-s$ and to equation \eqref{Eq:LaplCodim1}
with $k = n-1$ and using Examples \ref{Ex:Det} and \ref{Ex:Forest} one
obtains

 \begin{corollary}\label{Cr:Diag}
The diagonal minor of the Laplace matrix obtained by deletion of the rows
and columns numbered $i_1 \DT, i_s$ is equal to
$\frac{(-1)^{n-s}}{(n-s)!}\langle W \mid U(\AC_{n,n-s}^I)\rangle$, that is,
to $(-1)^{n-s}$ times the sum of monomials $w_{a_1 b_1} \dots w_{a_{n-s}
b_{n-s}}$ such that the graph $([a_1 b_1] \DT, [a_{n-s} b_{n-s}])$ is a
$s$-component forest where every component contains exactly one vertex
$i_\ell$ for some $\ell = 1 \DT, s$, and all the edges of the component are
directed towards $i_\ell$.
 \end{corollary}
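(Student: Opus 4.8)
The plan is to deduce the corollary from Theorem~\ref{Th:Diag} by specializing \eqref{Eq:LaplDiag} to $k=n-s$ and pairing both sides with the Laplace matrix $\widehat W$. I would use the identity $\langle\widehat W\mid X\rangle=\langle W\mid\Lapl(X)\rangle$, valid for every $X\in\Graph_{n,k}$: taking $X=\det_{n,n-s}^{I}$ and inserting \eqref{Eq:LaplDiag} with $k=n-s$ yields
\[
\langle\widehat W\mid\det_{n,n-s}^{I}\rangle=\langle W\mid\Lapl(\det_{n,n-s}^{I})\rangle=\frac{(-1)^{n}}{(n-s)!}\,\langle W\mid U(\AC_{n,n-s}^{I})\rangle ,
\]
so it remains to evaluate the two ends of this chain separately and compare.

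For the left end I would invoke Example~\ref{Ex:Det}: for an arbitrary $n\times n$ matrix $W'$ the value $\langle W'\mid\det_{n,n-s}^{I}\rangle$ is, up to the sign $(-1)^{s}$, the principal minor of $W'$ obtained by deleting the rows and columns $i_1\DT,i_s$. The sign $(-1)^{s}$ appears because each of the $s$ isolated vertices of a graph in $\SSC_{n,n-s}^{I}$ is a connected component of its own and hence contributes a factor $-1$ to $(-1)^{\beta_0}$; for $s=0$ this is precisely $\langle W'\mid\det_{n,n}^{\emptyset}\rangle=\det W'$. Applying this with $W'=\widehat W$, the left end equals $(-1)^{s}$ times the principal minor of $\widehat W$ with rows and columns $i_1\DT,i_s$ removed.

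For the right end, note first that every graph occurring in $\AC_{n,n-s}^{I}$ is acyclic, hence loopless, so $\langle\widehat W\mid G\rangle=\langle W\mid G\rangle$ for each of them and the diagonal entries of $\widehat W$ are irrelevant. By Example~\ref{Ex:Forest}, $\AC_{n,n-s}^{I}$ consists exactly of the edge-numbered $s$-component forests in which each component is a tree containing a single vertex $i_\ell\in I$ and all its edges are directed toward $i_\ell$. Each such forest has $n-s$ edges and admits $(n-s)!$ numberings, while $\langle W\mid\cdot\rangle$ does not see the numbering; hence
\[
\frac{1}{(n-s)!}\,\langle W\mid U(\AC_{n,n-s}^{I})\rangle=\sum_{F} w_{a_1 b_1}\dots w_{a_{n-s} b_{n-s}},
\]
the sum running over the underlying forests $F=([a_1 b_1]\DT,[a_{n-s} b_{n-s}])$ of that shape.

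Equating the two evaluations and cancelling $(-1)^{s}$ on the left against $(-1)^{n}=(-1)^{n-s}(-1)^{s}$ on the right, the principal minor of $\widehat W$ obtained by deleting the rows and columns $i_1\DT,i_s$ equals $\frac{(-1)^{n-s}}{(n-s)!}\langle W\mid U(\AC_{n,n-s}^{I})\rangle=(-1)^{n-s}\sum_{F}w_{a_1 b_1}\dots w_{a_{n-s} b_{n-s}}$, which is the assertion. All the substance is imported from Theorem~\ref{Th:Diag}; the only step that needs genuine care is the sign bookkeeping, namely correctly accounting for the $s$ isolated vertices (resp.\ sinks) in the Betti numbers entering $\det_{n,n-s}^{I}$ and $U(\AC_{n,n-s}^{I})$ so that the powers of $-1$ combine into $(-1)^{n-s}$.
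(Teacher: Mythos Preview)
Your argument is correct and is exactly the paper's approach: pair Theorem~\ref{Th:Diag} at $k=n-s$ with $\widehat W$ via $\langle\widehat W\mid X\rangle=\langle W\mid\Lapl(X)\rangle$ and identify the two ends using Examples~\ref{Ex:Det} and~\ref{Ex:Forest}. Your sign bookkeeping is in fact more careful than the paper's own Example~\ref{Ex:Det}, which suppresses the factor $(-1)^s$ contributed by the $s$ isolated vertices to $(-1)^{\beta_0}$; that factor is precisely what converts the $(-1)^n$ of \eqref{Eq:LaplDiag} into the $(-1)^{n-s}$ appearing in the Corollary.
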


\noindent and

 \begin{corollary}\label{Cr:Codim1}
The minor of the Laplace matrix obtained by deletion of its $i$-th row and
its $j$-th column is equal to $(-1)^{n-1}$ times the sum of monomials
$w_{a_1 b_1} \dots w_{a_{n-1} b_{n-1}}$ such that the graph $([a_1 b_1]
\DT, [a_{n-1} b_{n-1}])$ is a tree with all the edges directed towards the
vertex $i$.
 \end{corollary}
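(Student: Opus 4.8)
The whole substance of this corollary is already in Theorem~\ref{Th:Codim1}; the plan is to obtain it from that theorem by the formal step of pairing equation~\eqref{Eq:LaplCodim1} with the Laplace matrix $\widehat W$ and unwinding the definitions — exactly the argument indicated just before Corollary~\ref{Cr:Diag}, specialised to codimension $1$.

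Concretely, I would set $k=n-1$ in \eqref{Eq:LaplCodim1}. The object the corollary is about is the codimension $1$ minor of $\widehat W$ obtained by deleting its $i$-th row and $j$-th column, and by Example~\ref{Ex:Det} applied to $\widehat W$ in place of $W$ this minor is exactly $\langle\widehat W\mid\det_{n,n-1}^{i/j}\rangle$. Using the identity $\langle\widehat W\mid X\rangle=\langle W\mid\Lapl(X)\rangle$ and then Theorem~\ref{Th:Codim1}, this equals
\[
\langle W\mid\Lapl(\det_{n,n-1}^{i/j})\rangle=\frac{(-1)^n}{(n-1)!}\,\langle W\mid U(\AC_{n,n-1}^{\{i\}})\rangle .
\]

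To finish I would unwind $\langle W\mid U(\AC_{n,n-1}^{\{i\}})\rangle$ via Example~\ref{Ex:Forest} with $s=1$: an element of $\AC_{n,n-1}^{\{i\}}$ is, after forgetting the numbering of its $n-1$ edges, a tree on all $n$ vertices whose edges all point towards $i$, and conversely each such tree comes from exactly $(n-1)!$ numbered graphs in $\AC_{n,n-1}^{\{i\}}$, all sharing the value $\langle W\mid\cdot\rangle=w_{a_1b_1}\dots w_{a_{n-1}b_{n-1}}$ (the monomial read off the edges, independent of the numbering). Hence $\langle W\mid U(\AC_{n,n-1}^{\{i\}})\rangle=(n-1)!\sum_T w_{a_1b_1}\dots w_{a_{n-1}b_{n-1}}$; the $(n-1)!$ cancels the prefactor $\tfrac1{(n-1)!}$, and collecting the power of $-1$ — the $(-1)^n$ coming from Theorem~\ref{Th:Codim1} together with the normalisation of the minor in Example~\ref{Ex:Det} — leaves the factor $(-1)^{n-1}$ of the statement, multiplied by the sum over trees directed towards $i$.

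I do not expect any genuine obstacle: no new idea is needed beyond Theorem~\ref{Th:Codim1}. The one point that demands care is the bookkeeping — correctly accounting for the $(n-1)!$ edge numberings when passing from the sum over graphs in $\AC_{n,n-1}^{\{i\}}$ to the sum over the underlying trees, and tracking the overall sign through that passage (which is also, incidentally, the only place where the precise normalisation of ``minor'' in Example~\ref{Ex:Det} enters).
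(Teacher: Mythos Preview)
Your proposal is correct and follows precisely the route the paper itself indicates: apply $\langle\widehat W\mid\cdot\rangle$ to equation~\eqref{Eq:LaplCodim1} with $k=n-1$, identify the left-hand side as the codimension~$1$ minor via Example~\ref{Ex:Det}, and read off the right-hand side as a sum over directed trees via Example~\ref{Ex:Forest} with $s=1$, absorbing the $(n-1)!$ from edge numberings. Your caveat about the sign bookkeeping is well placed --- that is indeed the only delicate point, and the paper is no more explicit about it than you are.
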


Corollaries \ref{Cr:Diag} and \ref{Cr:Codim1} are particular cases of the
celebrated matrix-tree theorem first discovered by G.\,Kirchhoff
\cite{Kirch} in 1847 (for symmetric matrices and diagonal minors of
codimension $1$) and proved in its present form by W.\,Tutte
\cite{TutteMTT}.

Consider now the following functions on $\Gamma_{n,k}$:
 \begin{align*}
\sigma(G) &= \begin{cases}
(-1)^{\beta_1(G)}, &G \in \Gamma_{n,k} \text{ is strongly semiconnected},\\
0 &\text{otherwise},
 \end{cases}\\
\text{and\hspace{3.5cm}}\\
\alpha(G) &=  \begin{cases}
(-1)^k, &G \in \Gamma_{n,k} \text{ is acyclic},\\
0 &\text{otherwise}.
 \end{cases}
 \end{align*}

Theorem \ref{Th:Diag} follows from the two equivalent statements (see
Section \ref{Sec:Proofs} for details):

 \begin{theorem}\label{Th:Direct}
$\SumSub(\alpha;G) = (-1)^k \sigma(G)$ for $G \in \Gamma_{n,k}$.
 \end{theorem}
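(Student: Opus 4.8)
The plan is to prove the identity $\SumSub(\alpha;G) = (-1)^k\sigma(G)$ by induction on the number of edges $k$, using deletion of an edge. Since $\SumSub(\alpha;G) = \sum_{H\subseteq G}\alpha(H)$ sums over all subgraphs obtained by deleting some subset of edges, the natural move is to single out one edge, say the last one $e = e_k$, and split the sum: subgraphs $H\subseteq G$ either contain $e$ or not, giving $\SumSub(\alpha;G) = \SumSub(\alpha;G\setminus e) + \sum_{H\subseteq G,\ e\in H}\alpha(H)$. The first term is handled by the inductive hypothesis; the work is in identifying the second term. A subgraph $H$ with $e\in H$ is acyclic iff $H$ is acyclic, so $\sum_{H\ni e}\alpha(H) = -\sum_{H'\subseteq G\setminus e}\alpha(H'\cup e)$, where the extra sign $-1 = (-1)^{|H|}/(-1)^{|H'|}$ accounts for the one extra edge. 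Thus everything reduces to understanding when adding the fixed edge $e$ to an acyclic subgraph of $G\setminus e$ keeps it acyclic.

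The key case distinction is on the edge $e=[ab]$. \emph{Case 1: $e$ is a loop} ($a=b$). Then no subgraph containing $e$ is acyclic, so the second term vanishes and $\SumSub(\alpha;G) = \SumSub(\alpha;G\setminus e)$. On the other side, $G$ is never strongly semiconnected when it has a loop unless... actually a loop \emph{is} a cycle, so $e$ lies on a directed cycle, hence $\sigma(G)$ could be nonzero; one checks $\beta_1(G) = \beta_1(G\setminus e)+1$ since adding a loop raises the first Betti number by one, giving $\sigma(G) = -\sigma(G\setminus e)$ — wait, this needs care, and it is cleaner to observe $G$ strongly semiconnected forces every edge on a cycle, which a loop satisfies trivially, so the loop case is consistent with $\beta_1$ jumping by $1$. \emph{Case 2: $e=[ab]$ is not a loop.} Here I would use that $H'\cup e$ is acyclic iff $H'$ is acyclic \emph{and} $H'$ contains no directed path from $b$ to $a$. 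So $\sum_{H'\subseteq G\setminus e}\alpha(H'\cup e) = (-1)\cdot\bigl(\SumSub(\alpha;G\setminus e) - \sum_{H'\ \text{acyclic},\ b\to a\ \text{in}\ H'}(-1)^{|H'|}\bigr)$, and the correction sum over acyclic subgraphs containing a $b$-to-$a$ path is where the combinatorics lives. The main obstacle is to show this correction sum, together with the $\SumSub(\alpha;G\setminus e)$ terms, reassembles exactly into $(-1)^k\sigma(G)$, i.e.\ to track how strong semiconnectedness and $\beta_1$ of $G$ relate to those of $G\setminus e$ depending on whether $e$ is a "bridge" (its removal disconnects a strong component, or leaves some edge off all cycles) or a "cycle edge."

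An alternative, and perhaps cleaner, route is the following inclusion–exclusion / topological argument, which I would try in parallel. The quantity $\SumSub(\alpha;G) = \sum_{H\subseteq G}(-1)^{|E(H)|}[H\ \text{acyclic}]$ is, up to sign, an Euler characteristic of a simplicial-type complex: the "acyclic subgraphs of $G$" form a complex (downward closed under edge deletion: a subgraph of an acyclic graph is acyclic), so $\sum_{H\subseteq G}(-1)^{|E(H)|}[H\ \text{acyclic}] = \chi$ of that complex, and this is a homotopy invariant one can compute. One expects it to be $0$ unless $G$ is strongly semiconnected (if some edge $e$ lies on no cycle, then the "cone" structure — every acyclic $H$ stays acyclic after toggling $e$ — forces cancellation, giving $\SumSub(\alpha;G)=0=\sigma(G)$), and when $G$ \emph{is} strongly semiconnected the complex of acyclic subgraphs should be homotopy equivalent to a wedge of spheres of dimension governed by $\beta_1(G)$, yielding $(-1)^k(-1)^{\beta_1(G)}$. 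The hard part in this second approach is the homotopy computation for strongly semiconnected $G$; the hard part in the first approach is the bookkeeping of signs across the bridge/non-bridge dichotomy. I would present the induction on $k$ as the primary proof, using Case 1 to reduce to loopless graphs and, within Case 2, further splitting on whether $e$ lies on a directed cycle in $G$, since in the non-cycle case both sides of the target identity vanish by the toggling argument, leaving only the genuinely combinatorial case where $e$ is a cycle edge to be pushed through.
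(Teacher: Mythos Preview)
Your setup is sound as far as it goes: the toggling argument when some edge $e$ lies on no directed cycle correctly kills both sides, and the loop case is easily disposed of (a loop is never in an acyclic subgraph, while removing it drops $\beta_1$ by one, so $\SumSub(\alpha;G)=\SumSub(\alpha;G\setminus e)$ and $\sigma(G)=-\sigma(G\setminus e)$ match up). Multiplicativity over connected components, which you do not mention, lets one reduce to connected $G$; combined with the toggling reduction this brings you to the core case: $G$ is \emph{strongly connected}.

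The genuine gap is precisely the case you flag but do not resolve: $G$ strongly connected with every edge on a directed cycle. Your deletion-only recursion stalls here because $G\setminus e$ may still be strongly connected, so the inductive hypothesis on $G\setminus e$ yields $(-1)^{k-1}\sigma(G\setminus e)=(-1)^k\sigma(G)$, and you are left needing the ``correction sum'' $\sum_{H'\subseteq G\setminus e,\ H'\text{ acyclic},\ b\succeq a\text{ in }H'}(-1)^{|H'|}$ to vanish --- but you give no mechanism to show this, and in fact it does \emph{not} vanish in general (it equals $\SumSub(\alpha;G_e^{\vee})$ for the graph with $e$ reversed, which can be nonzero). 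The paper supplies two ideas you are missing. First, it also uses \emph{contraction} $G/e$, not just deletion, and runs the induction simultaneously on the number of vertices; for a ``crucial'' edge $e$ (one whose removal breaks strong connectivity) the contraction $G/e$ is strongly connected with one fewer vertex and the same $\beta_1$, while the deletion side vanishes by the toggling argument, closing that subcase. Second, for the hardest subcase where no crucial edge exists, the paper proves an \emph{edge reversal lemma}: if $e$ is non-crucial then the identity holds for $G$ iff it holds for $G_e^{\vee}$. One then reverses, one by one, all edges entering a fixed vertex $a$; eventually the graph ceases to be strongly connected, so at some step a crucial edge appears, reducing to the previous subcase. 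Your plan contains neither contraction nor reversal, and induction on $k$ alone cannot reach the base without one of them.

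Your alternative ``Euler characteristic of the complex of acyclic subgraphs'' idea is attractive but, as stated, is just a reformulation: identifying the homotopy type of that complex for strongly connected $G$ is exactly the missing computation, and you would still need a concrete inductive or shelling argument to carry it out.
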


{\def \theTheorem {\ref{Th:Direct}'}
 \begin{Theorem}
$\SumSub(\sigma;G) = (-1)^k\alpha(G)$ for $G \in \Gamma_{n,k}$.
 \end{Theorem}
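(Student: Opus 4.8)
The plan is to deduce the statement from Theorem~\ref{Th:Direct} by M\"obius inversion over the Boolean lattice of edge subsets; since M\"obius inversion is a biconditional, the same computation simultaneously establishes the equivalence of the two theorems. Fix $G \in \Gamma_{n,k}$ and write $E$ for its set of edges. Identifying each subgraph $H \subseteq G$ with the subset $S \subseteq E$ of numbered edges retained in it, the sum $\SumSub(f;G)$ is, for any $f$, a summation over the Boolean lattice $2^{E}$,
\[
\SumSub(f;G) = \sum_{S \subseteq E} f(G|_S),
\]
where $G|_S$ denotes the subgraph of $G$ with edge set $S$; this is legitimate because $\sigma$ and $\alpha$ depend only on the isomorphism type of a graph — on strong semiconnectedness and $\beta_1$, respectively on acyclicity and the number of edges — and never on the edge or vertex numbering.

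Next I would recall that the M\"obius function of $2^{E}$ is $\mu(S,T) = (-1)^{|T|-|S|}$ for $S \subseteq T \subseteq E$, so that the identity ``$g(T) = \sum_{S\subseteq T} f(S)$ for all $T\subseteq E$'' is equivalent to ``$f(T) = \sum_{S\subseteq T}(-1)^{|T|-|S|}g(S)$ for all $T\subseteq E$''. I apply this with $f(S) \bydef \alpha(G|_S)$ and $g(T) \bydef (-1)^{|T|}\sigma(G|_T)$. Theorem~\ref{Th:Direct}, applied to each subgraph $G|_T \in \Gamma_{n,|T|}$, reads precisely $g(T) = \SumSub(\alpha;G|_T) = \sum_{S\subseteq T} f(S)$. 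Inverting yields, for every $T \subseteq E$,
\[
\alpha(G|_T) = f(T) = \sum_{S\subseteq T}(-1)^{|T|-|S|}(-1)^{|S|}\sigma(G|_S) = (-1)^{|T|}\,\SumSub(\sigma;G|_T).
\]
Taking $T = E$, so that $G|_T = G$ and $|T| = k$, gives $\SumSub(\sigma;G) = (-1)^k\alpha(G)$, which is the assertion.

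On this route I do not expect a serious obstacle: the only point that needs a sentence of justification is the first one — that the edge renumbering built into the notion of a subgraph is immaterial, so that $\SumSub(\,\cdot\,;G)$ is genuinely a sum over $2^{E}$. For completeness one could instead give a self-contained argument avoiding Theorem~\ref{Th:Direct}, which splits into two cases. If $G$ is acyclic then so is every subgraph, and an acyclic graph is strongly semiconnected only when it has no edges; hence the sole surviving term is the edgeless subgraph, with $(-1)^{\beta_1}=1$, so $\SumSub(\sigma;G) = 1 = (-1)^k\alpha(G)$. If $G$ contains a directed cycle one must show $\SumSub(\sigma;G) = 0$, and here lies the genuinely delicate step: one needs a fixed-point-free, sign-reversing involution on the strongly semiconnected subgraphs of $G$ — for instance obtained by toggling a distinguished edge lying on a cycle — and making it well defined requires care to keep the result strongly semiconnected and to change $\beta_1$ by exactly one. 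The M\"obius argument circumvents all of this.
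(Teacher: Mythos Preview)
Your argument is correct and matches the paper's own almost verbatim: the paper likewise observes that $\alpha$ and $\sigma$ are numbering-independent, rewrites $\SumSub$ as a sum over $2^{E(G)}$, and then applies M\"obius inversion on the Boolean lattice (with $\mu(S,T)=(-1)^{\lvert T\setminus S\rvert}$) to show the equivalence of Theorem~\ref{Th:Direct} and Theorem~\ref{Th:Direct}$'$, proving the former directly afterwards. Your remark that a self-contained involution proof would be the delicate alternative is accurate but extraneous to the comparison.
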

}
\noindent Here $\SumSub$ is summation over subgraphs, as defined by 
\eqref{Eq:SumSubgr}. These theorems are essentially \cite[Proposition 
6.16]{Bernardi}. We will nevertheless give their proofs in Section 
\ref{Sec:Proofs} thus answering a request for a direct proof expressed in 
\cite{Bernardi} (the original proof in \cite{Bernardi} is a specialization 
of a much more general identity).

\subsection{A digression: undirected graphs and the universal Potts
partition function}

Denote by $\Upsilon_{n,k}$ the set of all {\em undirected} graphs with $n$
vertices numbered $1 \DT, n$ and $k$ edges numbered $1 \DT, k$. Denote by
$\lmod \,\cdot\,\rmod: \Gamma_{n,k} \to \Upsilon_{n,k}$ the ``forgetful''
map replacing every edge by its undirected version; the edge numbering is
preserved. By $\Undir_{n,k}$ denote a vector space spanned by
$\Upsilon_{n,k}$; then $\lmod \,\cdot\,\rmod$ is extended to the linear map
$\Graph_{n,k} \to \Undir_{n,k}$. The notion of a subgraph and the notation
$\SumSub$ (see \eqref{Eq:SumSubgr}) for undirected graphs are similar to
those for $\Graph_{n,k}$. One can also define the operators $B_p:
\Undir_{n,k} \to \Undir_{n,k}$, $p = 1 \DT, k$, and the Laplace operator
$\Delta: \Undir_{n,k} \to \Undir_{n,k}$ for undirected graphs exactly as in
Definition \ref{Df:Laplace}.

For any $G \in \Undir_{n,k}$ consider the two-variable polynomial:
 \begin{equation}\label{Eq:DefPotts}
Z_G(q,v) = \SumSub(q^{\beta_0(H)} v^{\#\text{of edges of $H$}};G).
 \end{equation}
called {\em Potts partition function}. It is related \cite[Eq.\
(2.26)]{Sokal} to the Tutte polynomial $T_G$ of the graph $G$ as
 \begin{equation*}
T_G(x,y) = (x-1)^{-\beta_0(G)} (y-1)^{-n} Z((x-1)(y-1), y-1;G).
 \end{equation*}
Values of $Z_G$ in some points have a special combinatorial interpretation,
in particular

 \begin{proposition}[\protect{\cite[V, (8)
and (10)]{WelshMerino}}]\label{Pp:SpecVal}\strut
 \begin{align*}
Z_G(-1,1) &= (-1)^{\beta_0(G)} 2^{\#\text{\upshape of loops of $G$}} \#
\{\Phi \in \SSC_{n,k} \subset \Gamma_{n,k} \mid \lmod \Phi\rmod = G\}.\\
Z_G(-1,-1) &= (-1)^n \# \{\Phi \in \AC_{n,k} \subset \Gamma_{n,k} \mid
\lmod \Phi\rmod = G\}.
 \end{align*}
 \end{proposition}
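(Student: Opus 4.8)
The plan is to prove both evaluations simultaneously by induction on the number $k$ of edges, by showing that the two sides of each equality obey the same deletion--contraction recursion. Write $a(G)\bydef\#\{\Phi\in\AC_{n,k}\mid\lmod\Phi\rmod=G\}$ and $s(G)\bydef\#\{\Phi\in\SSC_{n,k}\mid\lmod\Phi\rmod=G\}$ for the numbers of, respectively, acyclic and totally cyclic orientations of $G$. A loop of $G$ maps to a directed loop, which is forced and is its own $1$-cycle; hence $a(G)=0$ once $G$ carries a loop, and $s(G)=s(G\setminus e)$ whenever $e$ is a loop. Splitting the subgraphs $H\subseteq G$ in \eqref{Eq:DefPotts} according to whether a fixed edge $e$ belongs to $H$ yields at once the elementary recursion
\begin{equation*}
Z_G=Z_{G\setminus e}+vZ_{G/e}\ \ (e\text{ not a loop}),\qquad Z_G=(1+v)Z_{G\setminus e}\ \ (e\text{ a loop}),
\end{equation*}
while the base case $k=0$ is immediate: then $Z_G(q,v)=q^n$, $a(G)=s(G)=1$ and $\beta_0(G)=n$.

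At $(q,v)=(-1,-1)$, if $G$ has a loop then $Z_G(-1,-1)=0$ (the factor $1+v$ vanishes) and $a(G)=0$; otherwise pick any edge $e$. The recursion gives $Z_G(-1,-1)=Z_{G\setminus e}(-1,-1)-Z_{G/e}(-1,-1)$, which by the induction hypothesis (note $G\setminus e$ has $n$ vertices and $G/e$ has $n-1$) equals $(-1)^n\bigl(a(G\setminus e)+a(G/e)\bigr)$; it remains to check the classical identity $a(G)=a(G\setminus e)+a(G/e)$ for a non-loop edge $e$. This is the usual bijection: the restriction to $G\setminus e$ of an acyclic orientation of $G$ is acyclic, every acyclic orientation of $G\setminus e$ has one or two acyclic extensions across $e=\{u,w\}$, and it has two precisely when it contains no directed path between $u$ and $w$, i.e.\ as often as $G/e$ has acyclic orientations. (By Euler's relation $\beta_0(H)+\#E(H)=n+\beta_1(H)$ the equality being proved is $\sum_{H\subseteq G}(-1)^{\beta_1(H)}=a(G)$, i.e.\ Stanley's theorem that $T_G(2,0)$ counts acyclic orientations.)

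At $(q,v)=(-1,1)$, if $G$ has a loop $e$ then $Z_G(-1,1)=2Z_{G\setminus e}(-1,1)$, and since $\beta_0$, $s(\cdot)$ and the set of non-loop edges are unchanged while the loop count drops by one, the induction hypothesis reproduces the claim together with its factor $2^{\#\text{loops of }G}$. So assume $G$ loopless and pick an edge $e$. If $e$ is a bridge, then $\beta_0(G\setminus e)=\beta_0(G)+1$, $\beta_0(G/e)=\beta_0(G)$, both graphs are loopless, and by induction $Z_G(-1,1)=Z_{G\setminus e}(-1,1)+Z_{G/e}(-1,1)=(-1)^{\beta_0(G)}\bigl(s(G/e)-s(G\setminus e)\bigr)$; since no directed cycle crosses a cut vertex, $s(G\setminus e)=s(G/e)$ (in both graphs the count is the product of the orientation counts of the two sides of the bridge), while $s(G)=0$ because a bridge lies on no cycle, so the two sides agree. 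If $e$ is an ordinary edge, then $\beta_0$ is preserved by both operations and $G/e$ acquires $\ell$ loops, $\ell$ being the number of edges of $G$ parallel to $e$; the induction hypothesis then reduces the claim to the deletion--contraction recursion $s(G)=s(G\setminus e)+2^{\ell}s(G/e)$ for the number of totally cyclic orientations.

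The step I expect to be the real obstacle is precisely this last recursion: verifying $s(G)=s(G\setminus e)+2^{\ell}s(G/e)$ for an ordinary edge requires a careful bijective analysis of how a totally cyclic orientation of $G$ restricts to $G\setminus e$ and of how the $\ell$ edges parallel to $e$ turn into loops of $G/e$ (each such loop being ``traversable in two ways'', which produces the factor $2^{\ell}$), with the bridge and loop subcases above as its terminal cases. Alternatively one may quote the theorem of Las Vergnas that a loopless graph has $T_G(0,2)$ totally cyclic orientations, together with the deletion--contraction rule for the Tutte polynomial; this even gives a shorter independent proof of the whole proposition, since substituting $(q,v)=(-1,-1)$ and $(q,v)=(-1,1)$ into the relation between $T_G$ and $Z_G$ recalled above gives $Z_G(-1,-1)=(-1)^{n}T_G(2,0)$ and $Z_G(-1,1)=(-1)^{\beta_0(G)}T_G(0,2)$, after which the statement is exactly the combinatorial meanings $T_G(2,0)=a(G)$ (Stanley) and $T_G(0,2)=2^{\#\text{loops of }G}\,s(G)$ (Las Vergnas). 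I would make the inductive argument the main proof and relegate the Tutte-polynomial derivation to a remark.
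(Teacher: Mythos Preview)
The paper does not actually prove this proposition: it is quoted verbatim from Welsh--Merino \cite{WelshMerino}, and the paper uses it as a black box. So there is no ``paper's own proof'' to compare against; your write-up supplies what the paper omits.

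Your argument is the standard one and is correct. The deletion--contraction recursion for $Z_G$ is immediate from \eqref{Eq:DefPotts}; the base case and the loop/bridge subcases are handled cleanly; and the recursion $a(G)=a(G\setminus e)+a(G/e)$ for a non-loop edge is exactly Stanley's bijection, which you state accurately (including the implicit observation that parallel edges make $G/e$ have loops and hence $a(G/e)=0$, matching the fact that there is then always a directed path between the endpoints in $G\setminus e$). For the totally cyclic side you correctly isolate the only nontrivial step, the recursion $s(G)=s(G\setminus e)+2^{\ell}s(G/e)$ for an ordinary edge with $\ell$ parallels. A direct bijection here is genuinely fiddlier than in the acyclic case (the natural map goes via contraction rather than deletion: every totally cyclic orientation of $G$ pushes down to one of $G/e$, and one counts preimages), and your instinct to invoke Las~Vergnas' theorem $T_G(0,2)=2^{\#\text{loops}}\,s(G)$ together with the Tutte deletion--contraction is the right shortcut. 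In fact, the short derivation you sketch at the end---substitute $(q,v)=(-1,\pm 1)$ into the $Z$--$T$ relation to get $Z_G(-1,-1)=(-1)^n T_G(2,0)$ and $Z_G(-1,1)=(-1)^{\beta_0(G)}T_G(0,2)$, then quote Stanley and Las~Vergnas---is essentially how the result appears in \cite{WelshMerino}, so I would promote that to the main argument and drop the longer induction rather than the other way around.
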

\noindent (Recall that by $\SSC_{n,k}$ and $\AC_{n,k}$ we denote the sets
of all strongly semiconnected and acyclic graphs in $\Gamma_{n,k}$,
respectively.)

 \begin{corollary} \label{Cr:NumSSC}
For any graph $G \in \Upsilon_{n,k}$ one has
 \begin{equation*}
\# \{\Phi \in \SSC_{n,k} \subset \Gamma_{n,k} \mid \lmod \Phi\rmod = G\} =
(-1)^{\beta_0(G)} Z_{\widehat G}(-1,1)
 \end{equation*}
where $\widehat G$ is the graph $G$ with all the loops deleted.
 \end{corollary}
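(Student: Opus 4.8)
The plan is to deduce Corollary \ref{Cr:NumSSC} directly from Proposition \ref{Pp:SpecVal} together with the elementary behaviour of the Potts partition function under deletion of loops. First I would observe that a loop, viewed as an undirected edge, is parallel to no other structure and contributes to no connectivity: adding a loop to any undirected graph leaves all Betti numbers $\beta_0$ of every subgraph unchanged. Concretely, if $G \in \Upsilon_{n,k}$ has a loop $e$, then every subgraph $H \subseteq G$ either contains $e$ or not, and the two subgraphs $H$ and $H \setminus e$ have the same number of connected components $\beta_0$; hence pairing them in \eqref{Eq:DefPotts} gives
\begin{equation*}
Z_G(q,v) = (1+v)\, Z_{G \setminus e}(q,v).
\end{equation*}
Iterating over all loops of $G$, if $\widehat G$ denotes $G$ with all $\ell$ loops removed, then $Z_G(q,v) = (1+v)^\ell Z_{\widehat G}(q,v)$. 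Evaluating at $v = 1$ yields $Z_G(-1,1) = 2^\ell Z_{\widehat G}(-1,1)$, and since deleting loops changes neither $n$, nor $\beta_0(G)$, nor the set of directed graphs $\Phi$ with $\lmod\Phi\rmod = G$ having a prescribed undirected shape modulo the orientations of the loops, this is exactly the normalization needed.

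Next I would compare the two forms of the first identity in Proposition \ref{Pp:SpecVal}, applied once to $G$ and once to $\widehat G$. For $G$ we get
\begin{equation*}
Z_G(-1,1) = (-1)^{\beta_0(G)} 2^{\ell}\, \#\{\Phi \in \SSC_{n,k} \mid \lmod\Phi\rmod = G\},
\end{equation*}
where $\ell$ is the number of loops of $G$. Combining this with $Z_G(-1,1) = 2^\ell Z_{\widehat G}(-1,1)$ and cancelling the common factor $2^\ell$ gives
\begin{equation*}
Z_{\widehat G}(-1,1) = (-1)^{\beta_0(G)}\, \#\{\Phi \in \SSC_{n,k} \mid \lmod\Phi\rmod = G\}.
\end{equation*}
Multiplying both sides by $(-1)^{\beta_0(G)} = (-1)^{\beta_0(\widehat G)}$ (loops do not affect connectivity) yields the claimed formula, since $(-1)^{2\beta_0(G)} = 1$.

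The only genuinely substantive point — and the place where one must be careful rather than where one faces a real obstacle — is the claim $Z_G(-1,1) = 2^\ell Z_{\widehat G}(-1,1)$, i.e. that each loop contributes a factor of $2$ at the evaluation point $(q,v) = (-1,1)$. This is where the bookkeeping of the numbered edges and the pairing of subgraphs $H \leftrightarrow H \setminus e$ must be spelled out, and where one should check that the definition \eqref{Eq:DefPotts} indeed counts the edge $e$ with weight $v$ regardless of its being a loop. Everything else is a direct substitution into Proposition \ref{Pp:SpecVal}. One should also remark for consistency that the factor $2^{\#\text{of loops of }G}$ in Proposition \ref{Pp:SpecVal} corresponds to the two orientations of each loop in a directed lift $\Phi$ of $G$ — a loop $[aa]$ admits only one orientation as a directed edge, so in fact the ``$2^{\#\text{of loops}}$'' there reflects the convention by which loops are handled in $\SSC_{n,k}$; this is consistent with Corollary \ref{Cr:NumSSC} because after deleting loops from $G$ the count $\#\{\Phi \in \SSC_{n,k} \mid \lmod\Phi\rmod = \widehat G\}$ already excludes loop-orientation multiplicities, so no spurious power of $2$ survives on the right-hand side.
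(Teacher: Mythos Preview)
Your proof is correct and follows exactly the paper's approach: the paper's one-line proof is precisely your identity $Z_G(q,v)=(1+v)^{\ell}Z_{\widehat G}(q,v)$, after which the corollary is immediate from Proposition~\ref{Pp:SpecVal}. Your final paragraph of commentary is unnecessary and somewhat muddled (e.g.\ the set $\{\Phi\in\SSC_{n,k}\mid\lmod\Phi\rmod=\widehat G\}$ is empty since $\widehat G\in\Upsilon_{n,k-\ell}$, and the $2^{\ell}$ in Proposition~\ref{Pp:SpecVal} does not come from loop orientations), but the actual argument preceding it is complete without it.
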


 \begin{proof}
The definition \eqref{Eq:DefPotts} of the Potts partition function implies
immediately that $Z_G(q,v) = (v+1)^{\#\text{of loops of $G$}} Z_{\widehat
G}(q,v)$.
 \end{proof}

Consider now the {\em universal Potts partition function}
 \begin{equation*}
{\mathcal Z}_{n,k}(q,v) \bydef \sum_{G \in \Upsilon_{n,k}} Z_G(q,v)G \in
\Undir_{n,k}
 \end{equation*}
and its ``shaved'' version
 \begin{equation*}
\widehat{{\mathcal Z}}_{n,k}(q,v) \bydef \sum_{G \in \Upsilon_{n,k}}
Z_{\widehat G}(q,v)G \in \Undir_{n,k}.
 \end{equation*}

 \begin{proposition} \label{Pp:LaplTutte}
 \begin{equation*}
\Delta\widehat{{\mathcal Z}}_{n,k}(-1,1) = (-1)^k {\mathcal
Z}_{n,k}(-1,-1).
 \end{equation*}
 \end{proposition}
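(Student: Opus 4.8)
The plan is to reduce Proposition~\ref{Pp:LaplTutte} to Theorem~\ref{Th:Direct} (equivalently, to the identity $\SumSub(\alpha;G)=(-1)^k\sigma(G)$) by passing through the forgetful map $\lmod\,\cdot\,\rmod$. First I would unwind both sides coefficient by coefficient: the coefficient of a basic undirected graph $G\in\Upsilon_{n,k}$ in $\widehat{\mathcal Z}_{n,k}(-1,1)$ is $Z_{\widehat G}(-1,1)$, which by Corollary~\ref{Cr:NumSSC} equals $(-1)^{\beta_0(G)}\,\#\{\Phi\in\SSC_{n,k}\mid\lmod\Phi\rmod=G\}$; note $\beta_0(G)=\beta_0(\widehat G)$ and that deleting loops does not change connected components. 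Similarly, the coefficient of $G$ in ${\mathcal Z}_{n,k}(-1,-1)$ is $Z_G(-1,-1)=(-1)^n\,\#\{\Phi\in\AC_{n,k}\mid\lmod\Phi\rmod=G\}$ by the second formula in Proposition~\ref{Pp:SpecVal}. So the claimed identity, once we know how $\Delta$ interacts with $\lmod\,\cdot\,\rmod$, becomes a statement counting directed lifts of $G$.

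Next I would record the compatibility of the Laplace operator with the forgetful map. Since the operators $B_p$ are defined identically on $\Graph_{n,k}$ and on $\Undir_{n,k}$, and since $\lmod\,\cdot\,\rmod$ intertwines the edge-$p$ replacement operators $R_{ab;p}$ with their undirected analogues, one gets $\lmod\Delta(X)\rmod=\Delta(\lmod X\rmod)$ for all $X\in\Graph_{n,k}$. Now apply Theorem~\ref{Th:Diag} with $I=\emptyset$: $\Delta(\det_{n,k}^\emptyset)=\frac{(-1)^n}{k!}U(\AC_{n,k}^\emptyset)$. The left side is, up to the scalar $\tfrac{(-1)^k}{k!}$, $\Delta$ applied to $X(\SSC_{n,k}^\emptyset)=\sum_{\Phi}(-1)^{\beta_0(\Phi)}\Phi$. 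Pushing this whole equation forward under $\lmod\,\cdot\,\rmod$ and using $\Delta\circ\lmod\,\cdot\,\rmod=\lmod\,\cdot\,\rmod\circ\Delta$ turns it into an identity in $\Undir_{n,k}$ between $\Delta$ of a signed sum over directed strongly semiconnected graphs and a sum over directed acyclic graphs, both reorganized by their undirected images. However, Theorem~\ref{Th:Diag} only handles the $I=\emptyset$ stratum; to recover all of $\SSC_{n,k}$ and $\AC_{n,k}$ (all choices of isolated vertices / sinks) I would either sum \eqref{Eq:LaplDiag} over all $I\subset\{1\DT,n\}$, or — cleaner — work directly with the generating-function identity $\SumSub(\alpha;G)=(-1)^k\sigma(G)$ from Theorem~\ref{Th:Direct}, which already sees all strata at once.

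The cleanest route, then, is the second one. Fix $G\in\Upsilon_{n,k}$ and a directed lift; but better, sum over lifts. For a directed graph $\Phi\in\Gamma_{n,k}$, the subgraphs of $\Phi$ project bijectively onto the subgraphs of $\lmod\Phi\rmod$ with matching edge-subsets, so $\SumSub(\alpha;\Phi)$ computes, via Theorem~\ref{Th:Direct}, to $(-1)^k\sigma(\Phi)=(-1)^k(-1)^{\beta_1(\Phi)}[\Phi\text{ str.\ semiconn.}]$. Summing $\SumSub(\alpha;\Phi)$ over all directed lifts $\Phi$ of a fixed $G$ and interchanging the sum over lifts with the sum over subgraphs regroups the left side as a sum, over undirected subgraphs $H\subseteq G$, of $\alpha$ evaluated on all directed lifts of $H$ — which by Proposition~\ref{Pp:SpecVal} (second line) is exactly $(-1)^k\cdot(-1)^n Z_H(-1,-1)/(-1)^n=\dots$ a $Z$-value — while the right side sums $(-1)^{\beta_1}$ over strongly semiconnected lifts, which by the first line of Proposition~\ref{Pp:SpecVal} (after the loop bookkeeping of Corollary~\ref{Cr:NumSSC}) is a $Z$-value of $\widehat G$. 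Matching these, inserting the action of $\Delta$ (which on the undirected side replaces each loop-bearing graph by the signed loop-resolution, accounting for the shave $\widehat G\mapsto G$ and the sign $(-1)^k$), yields the stated equality. The main obstacle I anticipate is precisely this bookkeeping at the final step: reconciling the loop factor $2^{\#\text{loops}}$ in Proposition~\ref{Pp:SpecVal}, the $(v+1)^{\#\text{loops}}$ from Corollary~\ref{Cr:NumSSC}, and the explicit loop-resolution built into $B_p$, so that the $\Delta$ on the left exactly implements the passage from $\widehat{\mathcal Z}$-data to $\mathcal Z$-data with the correct global sign $(-1)^k$; everything else is a transcription of results already in hand.
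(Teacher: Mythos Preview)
Your first option --- summing \eqref{Eq:LaplDiag} over all $I\subset\{1\DT,n\}$ --- is exactly the paper's proof, and it is three lines. Corollary~\ref{Cr:NumSSC} together with the definition of $\det_{n,k}^I$ gives
\[
\widehat{\mathcal Z}_{n,k}(-1,1)=(-1)^k k!\sum_{I\subset\{1\DT,n\}}\lmod\det_{n,k}^I\rmod;
\]
now apply $\Delta$, commute it past $\lmod\,\cdot\,\rmod$, invoke Theorem~\ref{Th:Diag} term by term to get $(-1)^{n+k}\sum_I\lmod U(\AC_{n,k}^I)\rmod$, and recognize this via the second line of Proposition~\ref{Pp:SpecVal} as $(-1)^k\mathcal Z_{n,k}(-1,-1)$. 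That is the whole argument; you had it and set it aside.

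The ``cleaner'' second route you actually pursue has two problems. First, the interchange of summation is not what you claim: for a fixed undirected subgraph $H'\subseteq G$, each directed lift $H$ of $H'$ extends to $2^{\#\{\text{non-loop edges of }G\setminus H'\}}$ distinct lifts $\Phi$ of $G$, so the regrouped left side is $\sum_{H'\subseteq G}2^{\#\{\text{non-loop edges of }G\setminus H'\}}\sum_{H:\lmod H\rmod=H'}\alpha(H)$, not the bare double sum you wrote. Second, and more structurally, Theorem~\ref{Th:Direct} does not mention $\Delta$; the only place $\Delta$ meets the $\sigma/\alpha$ duality is in the derivation of Theorem~\ref{Th:Diag} from Theorem~\ref{Th:Direct}' (Section~\ref{SSec:DiagFromDirect}), and that derivation \emph{is} the loop bookkeeping you flag as the main obstacle. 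So once you carry out that bookkeeping correctly you will have reproved Theorem~\ref{Th:Diag} and then applied it --- the same proof with the black box unpacked, not a shortcut around it.
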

\noindent Note that by Proposition \ref{Pp:SpecVal} the right-hand side of
the equality contains only graphs without loops, as does the left-hand
side.

 \begin{proof}
Corollary \ref{Cr:NumSSC} implies that
 \begin{equation*}
\widehat{{\mathcal Z}}_{n,k}(-1,1) = (-1)^k k!\sum_{I \subset \{1 \DT, n\}}
\lmod \det_{n,k}^I\rmod.
 \end{equation*}
Apply now the Laplace operator $\Delta$ to both sides of the equality.
Apparently, $\Delta$ commutes with the forgetful map: $\lmod \Delta(x)\rmod
= \Delta(\lmod x\rmod)$ for any $x \in \Graph_{n,k}$. Therefore by Theorem
\ref{Th:Diag} and Proposition \ref{Pp:SpecVal}
 \begin{align*}
\Delta\widehat{{\mathcal Z}}_{n,k}(-1,1) &= (-1)^k k!\sum_{I \subset \{1
\DT, n\}} \lmod \Delta \det_{n,k}^I\rmod = (-1)^{n+k} \sum_{I \subset \{1
\DT, n\}} \lmod U(\AC_{n,k}^I)\rmod \\
&= (-1)^k {\mathcal Z}_{n,k}(-1,-1).
 \end{align*}
 \end{proof}

Proposition \ref{Pp:LaplTutte} admits several generalizations. The author
is planning to write a separate paper considering action of the Laplace
operator on the universal Potts functions and their oriented-graph
versions.

\subsection{An application: invariants of $3$-manifolds}

Universal determinants have an application in $3$-dimensional topology, due
to M.\,Polyak. We describe it briefly here; see \cite{Toronto} and the
MSc.\ thesis \cite{Epstein} for detailed definitions, formulations and
proofs.

A {\em chainmail graph} is defined as a planar graph, possibly with loops
but without parallel edges; the edges (including loops) are supplied with
integer weights. We denote by $w_{ij} = w_{ji}$ the weight of the edge
joining vertices $i$ and $j$; $w_{ii}$ is the weight of the loop attached
to the vertex $i$. If the edge $[ij]$ is missing then $w_{ij} = 0$ by
definition.

There is a way (see \cite{Toronto}) to define for every chainmail graph $G$
a closed oriented $3$-manifold $\Manif(G)$; any closed oriented
$3$-manifold is $\Manif(G)$ for some $G$ (which is not unique). To the
chainmail graph $G$ with $n$ vertices one associates two $n \times
n$-matrices: the adjacency matrix $W(G) = (w_{ij})$ and the Laplace (better
to say, Schroedinger) matrix $L(G) = (l_{ij})$ where $l_{ij} \bydef w_{ij}$
for $i \ne j$ and $l_{ii} \bydef w_{ii} - \sum_{j \ne i} w_{ij}$. If all
$w_{ii} = 0$ (such $G$ is called a balanced graph) then $L(G)$ is the usual
(symmetric, degenerate) Laplace matrix $\widehat{W}$ from Section
\ref{SSec:Main}.

 \begin{Theorem}[\cite{Toronto}; see details of the proof in
\cite{Epstein}]\strut
 \begin{enumerate}
\item The rank of the homology group $H_1(\Manif(G),\Integer)$ is equal to
$\dim \name{Ker} L(G)$.

\item If $L(G)$ is nondegenerate (so that $\Manif(G)$ is a rational
homology sphere and $H_1(\Manif(G),\Integer)$ is finite) then
 \begin{equation}\label{Eq:3ManifDet}
\lmod H_1(\Manif(G),\Integer)\rmod = \lmod \det L(G)\rmod = \lmod\langle
L(G) \mid \det_{n,n}^\emptyset\rangle\rmod.
 \end{equation}

\item If $L(G)$ is nondegenerate then
 \begin{equation}\label{Eq:3ManifTheta}
\langle W(G) \mid \Theta_n\rangle = 12 \det L(G)
\bigl(\lambda_{CW}(\Manif(G)) - \frac{1}{4} \name{sign}(L(G))\bigr)
 \end{equation}
where $\lambda_{CW}$ is the Casson--Walker invariant \cite{Walker} of the
raional homology sphere $\Manif(G)$, $\name{sign}$ is the signature of the
symmetric matrix $L(G)$, and $\Theta_n$ is an element of $\Graph_{n,n+1}
\oplus \Graph_{n,n-1}$ defined as
 \begin{equation*}
\Theta_n \stackrel{\mathrm{def}}{=}\det_{n,n+1}^\emptyset - \sum_{1 \le i
\ne j \le n} ([ij])*\det_{n,n-2}^{\{i,j\}} - \sum_{i=1}^n
\det_{n,n-1}^{\{i\}}.
 \end{equation*}
 \end{enumerate}
 \end{Theorem}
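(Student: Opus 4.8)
The plan is to reduce the theorem to the surgery description of $\Manif(G)$ and then route the combinatorics through the universal determinants introduced above; for the topological input I would follow \cite{Toronto} and \cite{Epstein}. Recall that the chainmail graph $G$ with $n$ vertices determines a framed link $\mathcal L\subset S^3$ of $n$ components --- one unknotted ``ring'' $L_i$ per vertex, with $L_i$ and $L_j$ clasped according to the weight $w_{ij}$ --- such that $\Manif(G)$ is the Dehn surgery on $\mathcal L$. The point on which everything hinges is that the linking matrix of $\mathcal L$, with framings on the diagonal and linking numbers off it, equals the Schr\"odinger matrix $L(G)$.

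For part (1): surgery on a framed $n$-component link with linking matrix $M$ produces a closed $3$-manifold whose first homology is presented by $M$ (attach $2$-handles to $S^3\times[0,1]$ along the components and read off the cellular chain complex), so $\name{rank}H_1(\Manif(G),\Integer)=n-\name{rank}L(G)=\dim\name{Ker}L(G)$. For part (2): if $L(G)$ is nondegenerate then the presented group $\Integer^n/L(G)\Integer^n$ is finite of order $\lmod\det L(G)\rmod$, while $\langle L(G)\mid\det_{n,n}^\emptyset\rangle=\det L(G)$ by Example \ref{Ex:Det}; combining the two gives \eqref{Eq:3ManifDet}.

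Part (3) is where the work is. I would begin from a surgery formula for the Casson--Walker invariant --- Walker's \cite{Walker}, or Lescop's refinement of it --- which, after multiplication by $\det L(G)$, writes $\lambda_{CW}(\Manif(G))-\frac14\name{sign}(L(G))$ as a sum of contributions built from the minors of $L(G)$, the linking numbers $w_{ij}$, and the finite-type invariants of $\mathcal L$ of degree at most two. Since every $L_i$ is an unknot meeting the rest of $\mathcal L$ only through clasps, those invariants are explicit polynomials in the $w_{ij}$ whose monomials correspond to closed loops assembled from rings and clasps, that is, to strongly semiconnected subgraphs of $G$. This is precisely the combinatorics that $\langle W(G)\mid\cdot\rangle$ extracts from a universal minor: by Definition \ref{Df:Minors}, $\langle W(G)\mid\det_{n,k}^I\rangle$ is, up to $(-1)^k/k!$, the Betti-number-signed sum of $\prod w_{ij}$ over strongly semiconnected subgraphs whose set of isolated vertices is exactly $I$. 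The three summands of $\Theta_n$ --- $\det_{n,n+1}^\emptyset$, the marked-edge terms $\sum_{i\ne j}([ij])*\det_{n,n-2}^{\{i,j\}}$, and the terms $\sum_i\det_{n,n-1}^{\{i\}}$ --- are arranged so that the three corresponding families of configurations reproduce the surgery formula term by term; Theorems \ref{Th:Diag} and \ref{Th:Codim1} and Proposition \ref{Pp:DetProp} are the tools for rewriting each universal minor in whatever normal form the comparison requires. Cancelling the common factor $\det L(G)$ then yields \eqref{Eq:3ManifTheta}.

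I expect part (3) to be the only genuine obstacle, and it splits into two delicate tasks: casting the Casson--Walker surgery formula for this specific family of chainmail links into a shape whose summands are indexed by subgraphs of $G$, and carrying out the bookkeeping --- edge numberings, the factors $1/k!$, the Betti-number signs, and above all the normalization constant $12$ and the signature shift $\frac14\name{sign}(L(G))$ --- that identifies the resulting sum with $\langle W(G)\mid\Theta_n\rangle$. Parts (1) and (2) are standard Kirby calculus and linear algebra; complete proofs of all three parts are in \cite{Toronto,Epstein}.
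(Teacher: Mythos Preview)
The paper does not prove this theorem at all: it is stated with the attribution ``\cite{Toronto}; see details of the proof in \cite{Epstein}'' and is used only as background for the application section. Section~\ref{Sec:Proofs} of the paper proves Proposition~\ref{Pp:DetProp} and Theorems~\ref{Th:Diag}, \ref{Th:Codim1}, \ref{Th:Direct}, \ref{Th:Direct}$'$, but not this statement. So there is no ``paper's own proof'' to compare your proposal against.

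Your sketch is a reasonable outline of how such a proof would proceed --- parts (1) and (2) via the standard presentation of $H_1$ of a surgery manifold by the linking matrix, and part (3) via a surgery formula for the Casson--Walker invariant specialized to chainmail links --- and you correctly flag that the substance of part (3) lies in matching the surgery formula's combinatorics to the three summands of $\Theta_n$. One caution: the paper's own Theorems~\ref{Th:Diag} and \ref{Th:Codim1} are not actually needed for this statement (the paper uses them only afterwards, to compute $\Delta\Theta_n$ in the Corollary that follows), so invoking them as ``tools for rewriting each universal minor'' in the proof of \eqref{Eq:3ManifTheta} is likely misplaced. But since the paper itself defers entirely to \cite{Toronto} and \cite{Epstein}, your proposal is consistent with what the paper does.
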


Conjecturally, \eqref{Eq:3ManifDet} and \eqref{Eq:3ManifTheta} begin a
series of formulas for invariants of $3$-manifolds. See \cite{Toronto} for
details.

Applying $\Delta$ to the element $\Theta_n$ and using Theorem \ref{Th:Diag}
and Corollary \ref{Cr:Diag} one obtains

 \begin{corollary}
$\Delta \Theta_n = -2 U(\AC_{n,n-1})$. Therefore if $G$ is balanced then
$\langle L(G) \mid \Theta_n\rangle$ is equal to $-2$ times the codimension
$1$ diagonal minor of $L(G)$.
 \end{corollary}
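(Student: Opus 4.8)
The plan is to evaluate $\Delta\Theta_n$ by applying $\Delta$ separately to each of the three groups of summands in the definition of $\Theta_n$ and invoking Theorem~\ref{Th:Diag} each time. First, Theorem~\ref{Th:Diag} gives $\Delta(\det_{n,n+1}^\emptyset)=\frac{(-1)^n}{(n+1)!}U(\AC_{n,n+1}^\emptyset)$, and this contribution vanishes: a finite acyclic graph always has a sink (the terminal vertex of a maximal directed path), so $\AC_{n,n+1}^\emptyset=\emptyset$. In particular the degree-$(n+1)$ component of $\Delta\Theta_n$ is zero, consistent with $U(\AC_{n,n-1})$ lying entirely in degree $n-1$.

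Next I would treat the terms $([ij])*\det_{n,n-2}^{\{i,j\}}$ with $i\ne j$. Since the first edge $[ij]$ of every basic summand of this element is not a loop, $B_1$ fixes it; hence $\Delta=B_1B_2\cdots B_{n-1}$ restricted to $([ij])*\Graph_{n,n-2}$ coincides with $([ij])*(B_1\cdots B_{n-2})$, that is, $\Delta\bigl(([ij])*X\bigr)=([ij])*\Delta(X)$ with $\Delta$ taken on the appropriate graph space. Together with Theorem~\ref{Th:Diag} this yields $\Delta\bigl(([ij])*\det_{n,n-2}^{\{i,j\}}\bigr)=\frac{(-1)^n}{(n-2)!}\,([ij])*U(\AC_{n,n-2}^{\{i,j\}})$. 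For the last group, Theorem~\ref{Th:Diag} directly gives $\Delta(\det_{n,n-1}^{\{i\}})=\frac{(-1)^n}{(n-1)!}U(\AC_{n,n-1}^{\{i\}})$.

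Assembling these, $\Delta\Theta_n$ becomes an explicit linear combination of the elements $([ij])*U(\AC_{n,n-2}^{\{i,j\}})$ and $U(\AC_{n,n-1}^{\{i\}})$, and it remains to recognise this combination as $-2U(\AC_{n,n-1})$. The geometric input is Example~\ref{Ex:Forest}: a graph in $\AC_{n,n-2}^{\{i,j\}}$ is a forest of two trees, one rooted at $i$ and one at $j$ with all edges directed to the roots, so prepending $[ij]$ joins them along $i\to j$ into a spanning tree rooted at $j$; and $\AC_{n,n-1}^{\{i\}}$ is exactly the set of spanning trees rooted at $i$. One then counts, for each numbered graph occurring on the right-hand side, its total multiplicity from the two families (keeping track of the normalising factorials and of the edge-renumbering conventions under $*$ and under edge deletion), and checks that the total equals $-2$. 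I expect this multiplicity-and-sign bookkeeping to be the one real obstacle; everything preceding it is the mechanical handling of edge numberings.

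Finally, for the balanced case, $L(G)=\widehat{W(G)}$, so $\langle L(G)\mid\Theta_n\rangle=\langle\widehat{W(G)}\mid\Theta_n\rangle=\langle W(G)\mid\Delta\Theta_n\rangle=-2\,\langle W(G)\mid U(\AC_{n,n-1})\rangle$. Applying Corollary~\ref{Cr:Diag} then identifies the right-hand side with $-2$ times the codimension-$1$ diagonal minor of $L(G)$, which is the asserted conclusion.
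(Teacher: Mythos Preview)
Your approach is exactly the one the paper indicates (it offers no detailed argument beyond ``apply $\Delta$ and use Theorem~\ref{Th:Diag} and Corollary~\ref{Cr:Diag}''). The reductions you perform are all correct: $\AC_{n,n+1}^\emptyset=\emptyset$ since every nonempty acyclic graph has a sink; $\Delta$ commutes past the non-loop prefix $([ij])$; and Theorem~\ref{Th:Diag} gives the remaining two contributions. So on the level of strategy you match the paper precisely.

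However, your own geometric observation exposes a genuine difficulty that mere bookkeeping will not resolve. You note (correctly) that $([ij])*H$ with $H\in\AC_{n,n-2}^{\{i,j\}}$ is always a spanning tree rooted at $j$, and that $\AC_{n,n-1}^{\{i\}}$ consists of spanning trees rooted at $i$. Hence every basic graph appearing in the second and third families of $\Delta\Theta_n$ has \emph{exactly one} sink. But $U(\AC_{n,n-1})$ in the sense of Section~\ref{SSec:Def} contains all acyclic graphs with $n-1$ edges, including, for $n\ge 3$, graphs with two or more sinks (e.g.\ $([12],[13])$ when $n=3$). Such graphs cannot arise on the left-hand side. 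Moreover, even among trees the coefficients are not uniform: for $n=3$ a tree whose first edge ends at the root receives $\tfrac{(-1)^n}{(n-2)!}+\tfrac{(-1)^n}{(n-1)!}$, while one whose first edge does not receives only $\tfrac{(-1)^n}{(n-1)!}$. So the ``multiplicity-and-sign bookkeeping'' you anticipate does not close up to $-2U(\AC_{n,n-1})$ as written. Either the target should be read as a sum over single-sink classes (and then only after pairing with $\langle W\mid\cdot\rangle$ and using symmetry of $W$ can one hope for uniform coefficients), or the statement as printed needs correction; in either case, the paper's one-line justification leaves the same gap you identified.
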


The last assertion is \cite[Theorem 84]{Epstein}.

\subsection*{Acknowledgements}
The research was inspired by numerous discussions with prof.\ Michael
Polyak (Haifa Technion, Israel) whom the author wishes to express his most
sincere gratitude.

The research was funded by the Russian Academic Excellence Project `5-100'
and by the grant No.~15-01-0031 ``Hurwitz numbers and graph isomorphism''
of the Scientific Fund of the Higher School of Economics.

\section{Proofs}\label{Sec:Proofs}

We start with proving Proposition \ref{Pp:DetProp} (Section
\ref{SSec:PrDetProp}), to continue with Theorems \ref{Th:Direct} and
\ref{Th:Direct}' (Sections \ref{SSec:Equiv} and \ref{SSec:PrDirect}).
Theorem \ref{Th:Diag} will then follow from Theorem \ref{Th:Direct}'
(Section \ref{SSec:DiagFromDirect}), and Theorem \ref{Th:Codim1}, from
Theorem \ref{Th:Diag} and assertion \ref{It:RowCol} of Proposition
\ref{Pp:DetProp} (Section \ref{SSec:PrCodim1}).

For two vertices $a, b \in G \in \Gamma_{n,k}$ we will write $a \succeq b$
if $G$ contains a directed path starting at $a$ and finishing at $b$; also
$a \succeq a$ for any $a$ by definition.

\subsection{Proof of Proposition \ref{Pp:DetProp}}\label{SSec:PrDetProp}

 \begin{proof}[of assertion \ref{It:RowCol}]
Let $G$ be a strongly semiconnected graph, and $[ij] \in G$ be its edge
carrying number $1$. Since $G$ is strongly semiconnected, $j \succeq i$ in
$G \setminus ([ij])$, and therefore $\beta_0(G \setminus ([ij])) =
\beta_0(G)$. Then $G$ enters the left-hand side and the $(i,j)$-th term of
the sum at the right-hand side of \eqref{Eq:DetViaMinors} with the same
coefficient.
 \end{proof}

 \begin{proof}[of assertion \ref{It:PDer}]
Denote by $\SSC_{n,k}^{[i:q]}$ the set of all graphs $G \in
\SSC_{n,k}^\emptyset$ having $q$ loops ($0 \le q \le k$) attached to vertex
$i$. The graph $\hat G$ obtained from $G$ by deletion of
all these loops (with the relevant renumbering of the remaining edges)
belongs either to $\SSC_{n,k-q}^{[i:0]} \subset \SSC_{n,k-q}^\emptyset$ or,
if $q > 0$, to $\SSC_{n,k-q}^{\{i\}}$. Vice versa, if $q > 0$ and $\hat G
\in \SSC_{n,k-q}^{[i:0]} \cup \SSC_{n,k-q}^{\{i\}}$ then $G \in
\SSC_{n,k}^{[i:q]}$. Deletion of a loop does not break a graph, so
$\beta_0(G) = \beta_0(\hat G)$.

If $G \in \SSC_{n,k}^{[i:q]}$ then there are $\binom{k}{q}$ ways to assign
numbers to the loops of $G$ attached to $i$. Since $\langle W \mid
G\rangle$ does not depend on the edge numbering, one has for $q > 0$
 \begin{equation*}
\langle W \mid X(\SSC_{n,k}^{[i:q]})\rangle = \binom{k}{q} w_{ii}^q
\langle W \mid X(\SSC_{n,k-q}^{[i:0]}) + X(\SSC_{n,k-q}^{\{i\}})\rangle,
 \end{equation*}
so that
 \begin{align}
\langle W \mid \det_{n,k}^\emptyset\rangle &= \frac{1}{k!}\sum_{q=0}^k
\langle W \mid X(\SSC_{n,k}^{[i:q]})\rangle\nonumber \\
&= \frac{1}{k!} \langle W \mid X(\SSC_{n,k}^{[i:0]})\rangle + \sum_{q=1}^k
\frac{w_{ii}^q}{q!(k-q)!} \langle W \mid X(\SSC_{n,k-q}^{[i:0]}) +
X(\SSC_{n,k-q}^{\{i\}})\rangle\nonumber \\
&= \sum_{q=0}^k \frac{w_{ii}^q}{q!(k-q)!} \langle W \mid
X(\SSC_{n,k-q}^{[i:0]}) + X(\SSC_{n,k-q}^{\{i\}})\rangle - \langle W \mid
\det_{n,k}^{\{i\}}\rangle.\label{Eq:Develop}
 \end{align}
The expressions $\langle W \mid X(\SSC_{n,k-q}^{[i:0]}) +
X(\SSC_{n,k-q}^{\{i\}})\rangle$ and $\langle W \mid
\det_{n,k}^{\{i\}}\rangle$ do not contain $w_{ii}$. So, applying the
operator $\frac{\partial^m}{\partial w_{ii}^m}$ to equation
\eqref{Eq:Develop} and using the equation again with $k-m$ in place of $k$
one gets \eqref{Eq:Deriv}.
 \end{proof}

\subsection{Theorems \ref{Th:Direct} and \ref{Th:Direct}' are
equivalent.}\label{SSec:Equiv}

Denote by $E(G)$ the set of edges of the graph $G \in \Gamma_{n,k}$. The
functions $\alpha$ and $\sigma$ do not depend on the edge
numbering; so the summation in the left-hand side of both theorems is
performed over the set $2^{E(G)}$ of subsets of $E(G)$. The
equivalence of the theorems is now a particular case of the Moebius
inversion formula \cite{Rota}. Namely, for any finite set $X$ the Moebius
function of the set $2^X$ partially ordered by inclusion is $\mu(S,T) =
(-1)^{\#(S \setminus T)}$, where $S, T \subseteq X$. Therefore one has
 \begin{align*}
\SumSub(&\sigma;G) = (-1)^k \alpha(G) \\
&\Longleftrightarrow \SumSub(\mu(G,H) (-1)^{\#\text{edges of H}}
\alpha(H);G) = \sigma(G) \\
&\Longleftrightarrow \SumSub((-1)^{k-\#\text{edges of H}}
(-1)^{\#\text{edges of H}}\alpha(H);G) = \sigma(G) \\
&\Longleftrightarrow \SumSub(\alpha(H);G) = (-1)^k \sigma(G).
 \end{align*}

\subsection{Proof of Theorem \ref{Th:Direct}}\label{SSec:PrDirect}

To prove the theorem we use simultaneous induction by the number of
vertices and the number of edges of the graph $G$. If $\mathcal R$ is some
set of subgraphs of $G$ (different in different cases) and $\chi_{\mathcal
R}$ is the characteristic function of this set then for convenience we will
write $\SumSub(f,\mathcal R) \bydef \SumSub(f \chi_{\mathcal R},G) = \sum_{H
\in \mathcal R} f(H)$ for any function $f$ on the set of subgraphs.

Consider now the following cases:

\subsubsection{$G$ is disconnected.}

Let $G = G_1 \DT\sqcup G_m$ where $G_i$ are connected components. A
subgraph $H \subset G$ is acyclic if and only if the intersection $H_i
\bydef H \cap G_i$ is acyclic for all $i$. Hence $\alpha(H) = \alpha(H_1)
\dots \alpha(H_m)$, and therefore $\SumSub(\alpha,G) = \SumSub(\alpha, G_1)
\dots \SumSub(\alpha, G_m)$. By the induction hypothesis $\SumSub(\alpha,
G_i) = (-1)^{k_i}\sigma(G_i)$ where $k_i$ is the number of edges of $G_i$.
So
 \begin{equation*}
\SumSub(\alpha,G) = \SumSub(\alpha, G_1) \dots \SumSub(\alpha, G_m) =
(-1)^{k_1 \DT+ k_m} \sigma(G_1) \dots \sigma(G_m) = (-1)^k \sigma(G).
 \end{equation*}
Now it will suffice to prove Theorem \ref{Th:Direct} for connected graphs
$G$.

\subsubsection{$G$ is connected and not strongly
connected.}\label{SSec:NotSSC}

In this case $G$ contains an edge $e$ which is not contained in any
directed cycle. For such $e$ if $H \subset G$ is acyclic and $e \notin H$
then $H \cup \{e\}$ is acyclic, too. The converse is true for any $e$: if
an acyclic $H \subset G$ contains $e$ then $H \setminus \{e\}$ is acyclic.
Therefore
 \begin{equation*}
\SumSub(\alpha,G) = \sum_{\substack{H \subset G \setminus \{e\},\\ H \text{
is acyclic}}} (-1)^{\#\text{edges of $H$}} + (-1)^{\#\text{edges of $H \cup
\{e\}$}} = 0 = \sigma(G).
 \end{equation*}
So it will suffice to prove Theorem \ref{Th:Direct} for strongly connected
graphs $G$.

\subsubsection{$G$ is strongly connected and contains a crucial
edge.}\label{SSec:SSCCrucial}

We call an edge $e$ of a strongly connected graph $G$ crucial if $G
\setminus \{e\}$ is not strongly connected. Suppose $e = [ab] \in G$ is a
crucial edge.

Denote by $\mathcal R_e^-$ (resp., $\mathcal R_e^+$) the set of all
subgraphs $H \subset G$ such that $e \notin H$ (resp., $e \in H$). Let $H
\in \mathcal R_e^-$ be acyclic. Since $G \setminus \{e\}$ is not strongly
connected and contains one edge less than $G$, one has by Clause
\ref{SSec:NotSSC} above
 \begin{equation}\label{Eq:SumNoE}
\SumSub(\alpha,\mathcal R_e^-) = \SumSub(\alpha,G \setminus \{e\}) = 0.
 \end{equation}

Let now $H \in \mathcal R_e^+$ be acyclic; such $H$ contains no directed
paths joining $b$ with $a$. Since $G \setminus \{e\}$ is not strongly
connected, $G \setminus \{e\}$ does not contain a directed path joining $a$
with $b$ either. It means that such path in $H$ will necessarily contain
$e$, and therefore the graph $H/e \subset G/e$ (obtained by contraction of
the edge $e$) is acyclic. The converse is true for any $e$: if $e \in H$
and $H/e \subset G/e$ is acyclic then $H \subset G$ is acyclic, too. The
graph $G/e$ is strongly connected, contains one edge less (and one vertex
less) than $G$, and $\beta_1(G/e) = \beta_1(G)$, so $\sigma(G/e) =
\sigma(G)$. The graph $H/e$ contains one edge less than $H$, so
$\alpha(H/e) = -\alpha(H)$. Now by the induction hypothesis
 \begin{equation*}
\SumSub(\alpha,R_e^+) = -\SumSub(\alpha,G/e) = -(-1)^{k-1} \sigma(G/e) =
(-1)^k \sigma(G),
 \end{equation*}
and then \eqref{Eq:SumNoE} implies
 \begin{equation*}
\SumSub(\alpha,G) = \SumSub(\alpha,R_e^-) + \SumSub(\alpha,R_e^+) = 0 + (-1)^k
\sigma(G) = (-1)^k \sigma(G).
 \end{equation*}

\subsubsection{$G$ is strongly connected and contains no crucial edges.}

Let $e = [ab]\in G$ be an edge and not a loop: $b \ne a$. Recall that
$G_e^{\vee}$ will denote a graph obtained from $G$ by reversal of the edge
$e$. Since $e$ is not a crucial edge, $G \setminus \{e\} = G_e^{\vee}
\setminus \{e\}$ is strongly connected. So $G_e^{\vee}$ is strongly
connected, too, implying $\sigma(G_e^{\vee}) = \sigma(G)$.

 \begin{lemma} \label{Lm:Reversal}
If the graph $G$ is strongly connected and $e = [ab] \in G$ is not a crucial
edge then $\SumSub(\alpha,G) = \sigma(G)$ if and only if
$\SumSub(\alpha,G_e^{\vee}) = \sigma(G_e^{\vee}) = \sigma(G)$.
 \end{lemma}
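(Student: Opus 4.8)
The plan is to set up an explicit sign-reversing involution on the "bad" subgraphs that witnesses the cancellation between $\SumSub(\alpha,G)$ and $\SumSub(\alpha,G_e^\vee)$, reducing the claim to a statement about subgraphs that contain neither $e$ nor $e^\vee$. First I would split the sum $\SumSub(\alpha,G)$ according to whether a subgraph $H\subseteq G$ contains the edge $e=[ab]$ or not, and similarly split $\SumSub(\alpha,G_e^\vee)$ according to whether it contains the reversed edge $e^\vee=[ba]$. Writing $G_0\bydef G\setminus\{e\}=G_e^\vee\setminus\{e^\vee\}$, both expansions share the common block $\SumSub(\alpha,G_0)$, coming from subgraphs containing no copy of the distinguished edge in either orientation. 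So the lemma is equivalent to the identity
\begin{equation*}
\SumSub(\alpha,\{H\subseteq G: e\in H\}) = \SumSub(\alpha,\{H'\subseteq G_e^\vee: e^\vee\in H'\}),
\end{equation*}
after which I add $\SumSub(\alpha,G_0)$ to both sides and use $\sigma(G)=\sigma(G_e^\vee)$.

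Next I would analyze when a subgraph containing $e$ (resp.\ $e^\vee$) is acyclic. A subgraph $H\subseteq G$ with $e=[ab]\in H$ is acyclic iff $H\setminus\{e\}$ is acyclic and contains no directed path from $b$ to $a$; symmetrically $H'\subseteq G_e^\vee$ with $e^\vee=[ba]\in H'$ is acyclic iff $H'\setminus\{e^\vee\}$ is acyclic and contains no directed path from $a$ to $b$. Since $H\setminus\{e\}$ and $H'\setminus\{e^\vee\}$ both range over subgraphs of the common graph $G_0$, the map $H\mapsto (H\setminus\{e\})\cup\{e^\vee\}$ is a bijection from subgraphs of $G$ containing $e$ to subgraphs of $G_e^\vee$ containing $e^\vee$, preserving the number of edges and hence the value of $(-1)^{\#\text{edges}}$. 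Under this bijection an acyclic $H$ need not map to an acyclic $H'$: the obstruction is exactly a directed path from $a$ to $b$ (for $H'$) versus from $b$ to $a$ (for $H$) inside $K\bydef H\setminus\{e\}\subseteq G_0$. Thus the discrepancy between the two sides is concentrated on those $K\subseteq G_0$ which are acyclic and contain a directed path in precisely one of the two directions between $a$ and $b$ — say $K$ has an $a\!\to\!b$ path but (being acyclic) no $b\!\to\!a$ path. For such $K$, $K\cup\{e\}$ is acyclic while $K\cup\{e^\vee\}$ has a cycle, so $K$ contributes $(-1)^{\#E(K)+1}$ to the left-hand sum and $0$ to the right; the symmetric situation contributes the other way. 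I would then observe that since $K\subseteq G_0$ and $G_0$ is strongly connected (this is where $e$ being non-crucial enters), $K$ cannot contain an $a\!\to\!b$ path without... — no: strong connectivity of $G_0$ does \emph{not} force such a $K$ to have a $b\!\to\!a$ path, so instead I need a genuine involution on these exceptional $K$'s.

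The key step, and the main obstacle, is constructing this sign-reversing involution on the set of acyclic $K\subseteq G_0$ that join $a$ to $b$ in exactly one direction. Here I would use that $G_0=G\setminus\{e\}$ is strongly connected: fix, once and for all, a directed path $P$ in $G_0$ from $b$ to $a$ (it exists by strong connectivity). Given an exceptional $K$ with an $a\!\to\!b$ path, choose a canonical edge of $P$ — e.g.\ the first edge of $P$ (in the linear order along $P$) that lies outside $K$, or the last edge of $P$ that lies inside $K$, whichever is well-defined — and toggle its membership in $K$. One checks: (i) toggling this edge changes $\#E(K)$ by one, so it is sign-reversing; (ii) toggling preserves acyclicity and preserves the property of having an $a\!\to\!b$ path but no $b\!\to\!a$ path, because adding a single edge of $P$ to an acyclic $K$ that already lacks a $b\!\to\!a$ path cannot create one (it would need all of $P$) and cannot create any cycle (a new cycle would route through the new edge, giving a $b\!\to\!a$ path in the old $K$, contradiction); (iii) the canonical-edge rule is invariant under the toggle, so the map is an involution. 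The symmetric construction, using a fixed $a\!\to\!b$ path in $G_0$, handles exceptional $K$'s of the other type, and these two families are disjoint (an acyclic graph cannot contain paths in both directions). Summing, the exceptional contributions to the left-hand side cancel in pairs and likewise on the right, the two sides agree, and the lemma follows. The delicate point to get right is the precise definition of the canonical toggled edge so that conditions (ii) and (iii) both hold simultaneously; I expect this bookkeeping — rather than any conceptual difficulty — to be where the proof needs care.
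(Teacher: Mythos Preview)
Your reduction to showing $\SumSub(\alpha,G)=\SumSub(\alpha,G_e^\vee)$ is sound, as is the further reduction to
\[
\sum_{\substack{K\subseteq G_0\text{ acyclic}\\ a\succeq b\text{ in }K}}(-1)^{\#E(K)}
\;=\;
\sum_{\substack{K\subseteq G_0\text{ acyclic}\\ b\succeq a\text{ in }K}}(-1)^{\#E(K)}.
\]
The genuine gap is the next step: you propose a sign-reversing involution on each side \emph{separately}, which would force each sum to vanish. But these sums are not zero in general. Take $n=2$, $G=([12],[12],[21])$, and let $e$ be the first edge; then $G_0=([12],[21])$ is strongly connected, and the unique acyclic $K\subseteq G_0$ with a $1\to 2$ path is $K=\{[12]\}$, so the left sum equals $-1$, not $0$. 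Your claimed properties of the toggle already fail here: the fixed path $P$ from $b=2$ to $a=1$ in $G_0$ is the single edge $[21]$, and adding it to $K=\{[12]\}$ both creates a $b\to a$ path and creates a cycle, contradicting both parts of your condition~(ii). The difficulty is not bookkeeping in the choice of canonical edge --- no such involution can exist.

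What \emph{is} true is that the two sums above are equal (each equals $(-1)^k\sigma(G)$), and this is what the paper establishes --- not by a direct bijection, but by invoking the induction hypothesis on the smaller graphs $G\setminus\{e\}$ and $G/e$. In the paper's five-class decomposition of acyclic subgraphs of $G$, your two sums are precisely the classes $\mathrm{I}$ and $\mathrm{II}$; elementary cancellations give $\SumSub(\alpha,G)=\SumSub(\alpha,\mathrm{II})$ and $\SumSub(\alpha,G_e^\vee)=\SumSub(\alpha,\mathrm{I})$, and then induction on $G\setminus\{e\}$ and $G/e$ yields $\SumSub(\alpha,\mathrm{I})+\SumSub(\alpha,\mathrm{II})=2(-1)^k\sigma(G)$, which gives the ``if and only if'' without ever proving $\mathrm{I}=\mathrm{II}$ directly. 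To salvage your route you would need either that inductive input or a parity-preserving bijection \emph{between} the two exceptional families, not an involution within each one.
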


 \begin{proof}
Acyclic subgraphs $H \subset G$ are split into five classes:

\newcommand{\Clref}[1]{\mathrm{\ref{#1}}}

{\def \theenumi {\Roman{enumi}}
\def \labelenumi {\theenumi.}
 \begin{enumerate}
\item\label{It:NoEAtoB} $e \notin H$, but $a \succeq b$ in $H$ (that is,
$H$ contains a directed path joining $a$ with $b$).

\item\label{It:NoEBtoA} $e \notin H$, but $b \succeq a$ in $H$.

\item\label{It:NoENoPath} $e \notin H$, and both $a \not\succeq b$ and $b
\not\succeq a$ in $H$.

\item\label{It:EAtoB} $e \in H$, and $a \succeq b$ in $H \setminus \{e\}$.

\item\label{It:ENoPath} $e \in H$, and $a \not\succeq b$ in $H \setminus
\{e\}$.
 \end{enumerate}
}

Obviously, $H \in \Clref{It:NoEAtoB}$ if and only if $H \cup \{e\} \in
\Clref{It:EAtoB}$. The number of edges of $H \cup \{e\}$ is the number of
edges of $H$ plus $1$, so
 \begin{equation}\label{Eq:1Plus4}
\SumSub(\alpha, \Clref{It:NoEAtoB} \cup \Clref{It:EAtoB}) = \sum_{H \in
\mathrm{\ref{It:NoEAtoB}}} (-1)^{\#\text{ of edges of $H$}} \,(1-1) = 0.
 \end{equation}

Also, $H \in \mathrm{\ref{It:NoENoPath}}$ if and only if $H \cup \{e\} \in
\mathrm{\ref{It:ENoPath}}$, and similar to \eqref{Eq:1Plus4} one has
$\SumSub(\alpha,\Clref{It:NoENoPath} \cup \Clref{It:ENoPath}) = 0$, and
therefore
 \begin{equation}\label{Eq:AllEq2}
\SumSub(\alpha,G) = \SumSub(\alpha,\Clref{It:NoEAtoB} \cup
\Clref{It:NoEBtoA} \cup \Clref{It:NoENoPath} \cup \Clref{It:EAtoB} \cup
\Clref{It:ENoPath}) = \SumSub(\alpha,\Clref{It:NoEBtoA}).
 \end{equation}

Like in Clause \ref{SSec:SSCCrucial} if $H \in \Clref{It:ENoPath}$ then
$H/e \subset G/e$ is acyclic, and vice versa, if $e \in H$ and $H/e \subset
G/e$ is acyclic then $H \in \Clref{It:ENoPath}$. The graph $G/e$ is
strongly connected, so by the induction hypothesis
$\SumSub(\alpha,\Clref{It:ENoPath}) = -\SumSub(\alpha,G/e) = -(-1)^{k-1}
\sigma(G/e) = (-1)^k \sigma(G)$, hence $\SumSub(\alpha,\Clref{It:NoENoPath})
= -(-1)^k\sigma(G)$.

If $e \notin H$ and $H$ is acyclic, then $H$ is an acyclic subgraph of the
strongly connected graph $G \setminus \{e\}$. The graph $G$ is strongly
connected, too, so $e$ enters a cycle, and $\beta_1(G \setminus \{e\}) =
\beta_1(G)-1$, which implies $\sigma(G \setminus \{e\}) = -\sigma(G)$. The
graph $G \setminus \{e\}$ contains $k-1 < k$ edges, so by the induction
hypothesis
 \begin{equation*}
\SumSub(\alpha, \Clref{It:NoEAtoB} \cup \Clref{It:NoEBtoA} \cup
\Clref{It:NoENoPath}) = \SumSub(\alpha,G \setminus \{e\}) = (-1)^{k-1}
\sigma(G \setminus \{e\}) = (-1)^k \sigma(G),
 \end{equation*}
and therefore
 \begin{equation}\label{Eq:Sum12}
\SumSub(\alpha, \Clref{It:NoEAtoB} \cup \Clref{It:NoEBtoA}) = 2(-1)^k
\sigma(G).
 \end{equation}

A subgraph $H \subset G$ of class \ref{It:NoEAtoB} is at the same time a
subgraph $H \subset G_e^{\vee}$ of class \ref{It:NoEBtoA}. So,
\eqref{Eq:AllEq2} applied to $G_e^{\vee}$ gives $\SumSub(\alpha,
\Clref{It:NoEAtoB}) = \SumSub(\alpha, G_e^{\vee})$. If follows now from
\eqref{Eq:AllEq2} and \eqref{Eq:Sum12} that
 \begin{equation*}
\SumSub(\alpha,G) + \SumSub(\alpha,G_e^{\vee}) = 2(-1)^k \sigma(G) =
(-1)^k (\sigma(G) + \sigma(G_e^{\vee})),
 \end{equation*}
which proves the lemma.
 \end{proof}

To complete the proof of Theorem \ref{Th:Direct} let $a$ be a vertex of
$G$, and let $e_1 \DT, e_m$ be the complete list of edges finishing at
$a$. Consider the sequence of graphs $G_0 = G$, $G_1 = G_{e_1}^{\vee}$,
$G_2 = (G_1)_{e_2}^{\vee}$, \dots, $G_m = (G_{m-1})_{e_m}^{\vee}$. The
graphs $G_0$ and $G_1$ are strongly connected; the graph $G_m$ is not,
because $a \not\succeq b$ for any $b \ne a$ in it. Take the maximal $\ell$
such that $G_\ell$ is strongly connected. Since $\ell < m$, the graph
$G_{\ell+1}$ exists and is not strongly connected, and therefore $G_\ell
\setminus \{e_{\ell+1}\} = G_{\ell+1} \setminus \{e_{\ell+1}\}$ is not
strongly connected either. So, the edge $e_{\ell+1}$ is crucial for the
graph $G_\ell$, and by Clause \ref{SSec:SSCCrucial} one has
$\SumSub(\alpha,G_\ell) = (-1)^k \SumSub(G_\ell) = (-1)^k \sigma(G)$. The
graphs $G_0=G \DT, G_\ell$ are strongly connected, so for any $i = 0 \DT,
\ell-1$ the edge $e_{i+1}$ is not crucial for the graph $G_i$. Lemma
\ref{Lm:Reversal} implies now
 \begin{align*}
\SumSub(\alpha,G_{\ell-1}) = (-1)^k \sigma(G_{\ell-1})
&\Longrightarrow \SumSub(\alpha,G_{\ell-2}) = (-1)^k \sigma(G_{\ell-2})\\
&\DT\Longrightarrow \SumSub(\alpha,G) = (-1)^k \sigma(G).
 \end{align*}
Theorem \ref{Th:Direct} is proved.

\subsection{Theorem \ref{Th:Diag} follows from Theorem
\ref{Th:Direct}'}\label{SSec:DiagFromDirect}

Note first that the operation $B_i$, and hence $\Lapl$, preserves the sinks
of the graph: if $\Lapl H = \sum_G x_G G$ and $x_G \ne 0$ then $G$ has the
same sinks as $H$. Therefore if $I = \{i_1 \DT, i_s\}$ then
$\Lapl(\det_{n,k}^I) = \sum_G x_G G$ where all the graphs $G$ in the
right-hand side have the sinks $i_1 \DT, i_s$ and have no loops.

Let $G$ be a graph with sinks $i_1 \DT, i_s$ and without loops, and let
$\Phi \in \SSC_{n,k}^I$ (a strongly semiconnected graph with the isolated
vertices $i_1 \DT, i_s$). Denote by $\widehat{\Phi}$ the graph $\Phi$ with
the loops deleted. A contribution of $\Phi$ into $x_G$ is equal to
$\frac{1}{k!} (-1)^{\beta_0(\Phi) - \#\text{ of loops in $\Phi$} + n}$ if
$\widehat{\Phi} \subset G$ and is $0$ otherwise.

The number of edges of $\widehat{\Phi}$ is $k - \#\text{ of loops of
$\Phi$}$. The graph $\widehat{\Phi}$ is strongly semiconnected if and only
if $\Phi$ is. The Euler characteristics of $\widehat{\Phi}$ is
 \begin{equation*}
\beta_0(\widehat{\Phi}) - \beta_1(\widehat{\Phi}) = n - \#\text{ of edges
of $\widehat{\Phi}$} = n-k + \#\text{ of loops of $\Phi$}
 \end{equation*}
and $\beta_0(\widehat{\Phi}) = \beta_0(\Phi)$. Therefore, the contribution
of $\Phi$ into $x_G$ is
 \begin{equation*}
(-1)^{n+k+\beta_0(\widehat{\Phi})+\#\text{ of edges of
$\widehat{\Phi}$}}\frac{1}{k!} =
(-1)^{k+\beta_1(\widehat{\Phi})}\frac{1}{k!}
 \end{equation*}
if $\widehat{\Phi} \subset G$ is strongly semiconnected and $0$ otherwise.
Summing up,
 \begin{equation*}
x_G = \frac{(-1)^k}{k!}\SumSub(\sigma;G) = \frac{1}{k!} \alpha(G)
 \end{equation*}
by Theorem \ref{Th:Direct}'. This proves Theorem \ref{Th:Diag}.

\subsection{Proof of Theorem \ref{Th:Codim1}}\label{SSec:PrCodim1}

Note that $\det_{n,k}^{i/i} = \det_{n,k}^\emptyset + \det_{n,k}^{\{i\}}$.
Applying the operator $\Lapl$ to equation \eqref{Eq:DetViaMinors} and using
Theorem \ref{Th:Diag} with $I = \emptyset$ and $I = \{i\}$ one obtains
 \begin{align*}
0 &= \sum_{i,j=1}^n \Lapl(([ij]) * \det_{n,k}^{i/j}) = \sum_{i=1}^n
\Lapl ([ii]) * \Lapl(\det_{n,k}^\emptyset + \det_{n,k}^{\{i\}}) +
\sum_{\substack{i,j=1\\i \ne j}}^n ([ij]) * \Lapl(\det_{n,k}^{i/j}) \\
&= \sum_{\substack{i,j=1\\i \ne j}}^n ([ij]) * (\Lapl(\det_{n,k}^{i/j}) -
\Lapl(\det_{n,k}^{\{i\}})) = \sum_{\substack{i,j=1\\i \ne j}}^n ([ij]) *
(\Lapl(\det_{n,k}^{i/j}) - \frac{(-1)^k}{k!} U(\AC_{n,k}^{\{i\}})).
 \end{align*}
The $(i,j)$-th term of the identity above consists of graphs where the
edge $[ij]$ carries the number $1$. Therefore different terms of the
identity cannot cancel, so every single term is equal to $0$.


\begin{thebibliography}{9}
\bibitem{Bernardi} J.\,Awan and O.\.Bernardi, Tutte polynomials for
directed graphs, ArXiv:1610.01839v2.

\bibitem{Epstein} B.\,Epstein, A combinatorial invariant of $3$-manifolds
via cycle-rooted trees, MSc.\ thesis (under supervision of prof.\
M.\,Polyak), Technion, Haifa, Israel, 2015.

\bibitem {Kirch} G.\,Kirchhoff, \"Uber die Aufl\"osung der Gleichungen, auf
welche man bei der Untersuchung det linearen Verteilung galvanischer
Str\"ome gefurht wird, {\em Ann.\ Phys.\ Chem.}, {\bfseries 72} (1847),
S.~497--508.

\bibitem{Toronto} M.\,Polyak, From $3$-manifolds to planar graphs and
cycle-rooted trees, talk at {\em Arnold's legacy} conference, Fields
Institute, Toronto, 2014.

\bibitem{Rota} G.-C.\,Rota, On the foundations of combinatorial theory I:
Theory of Mobius functions, {\em Z.\ Wahrsch.\ Verw.\ Gebiete}, {\bf 2}
(1964) pp.~340--368.

\bibitem{Sokal} A.\,Sokal, The multivariate Tutte polynomial (alias Potts
model) for graphs and matroids, in: {\em Surveys in Combinatorics 2005},
Cambridge University Press, Jul 21, 2005 --- Mathematics --- 258 pages.

\bibitem{TutteMTT} W.T.\,Tutte, The dissection of equilateral triangles
into equilateral triangles, {\em Math.\ Proc.\ Cambridge Phil.\ Soc.}, {\bf
44} (1948) no.~4, pp.~463--482.

\bibitem{Walker} K.\,Walker, {\em An extension of Casson's invariant},
Annals of Mathematics Studies, {\bf 126}, Princeton University Press, 1992.

\bibitem{WelshMerino} D.J.A.\,Welsh and C.\,Merino, The Potts model and
the Tutte polynomial, {\em J.\ of Math.\ Physics}, {\bfseries 41} (2000),
no.~3, pp.~1127--1152.
 \end{thebibliography}
\end{document}